\theoremstyle{plain}
\newtheorem{theorem}[subsection]{Theorem}
\newtheorem{proposition}[subsection]{Proposition}
\theoremstyle{definition}
\newtheorem{definition}[subsection]{Definition}
\theoremstyle{remark}
\newtheorem{remark}[subsection]{Remark}
\newtheorem{example}[subsection]{Example}
\numberwithin{equation}{section}
\newenvironment{tfae}
{
\begin{enumerate}}
{\end{enumerate}}
\newcommand{\noproof}{\hfil\qed}
\newcommand{\bC}{\mathsf{C}}
\newcommand{\bF}{\mathbb{F}}
\DeclareMathOperator{\bchar}{char}
\DeclareMathOperator{\Aut}{Aut}
\DeclareMathOperator{\op}{op}
\DeclareMathOperator{\Act}{Act}
\DeclareMathOperator{\Hom}{Hom}
\DeclareMathOperator{\SplExt}{SplExt}
\DeclareMathOperator{\Imm}{Im}
\DeclareMathOperator{\Der}{Der}
\DeclareMathOperator{\DerL}{Der_{Lie}}
\DeclareMathOperator{\ADer}{ADer}
\DeclareMathOperator{\Bider}{Bider}
\DeclareMathOperator{\Inn}{Inn}
\DeclareMathOperator{\ad}{ad}
\DeclareMathOperator{\Ad}{Ad}
\DeclareMathOperator{\Leib}{Leib}
\DeclareMathOperator{\Zl}{Z_l}
\DeclareMathOperator{\Zr}{Z_r}
\DeclareMathOperator{\Z}{Z}
\DeclareMathOperator{\ZL}{Z_{Lie}}
\DeclareMathOperator{\hol}{Hol}
\DeclareMathOperator{\holL}{hol_{Lie}}
\DeclareMathOperator{\USGA}{USGA}
\newcommand{\Lie}{\ensuremath{\mathsf{Lie}}}
\newcommand{\sLeib}{\ensuremath{\mathsf{Leib}}}
\newcommand{\Grp}{\ensuremath{\mathsf{Grp}}}
\newcommand{\ov}[1]{\overline{#1}}
\date{}
\begin{document}
%\nocite{*}

\title[On Lie-holomorphs of Leibniz algebras]{On Lie-holomorphs of Leibniz algebras}

\author[G.~La Rosa]{Gianmarco La Rosa~\orcidlink{0000-0003-1047-5993}}
%\author[M.~Mancini]{M.~Mancini$^{*}$~\orcidlink{0000-0003-2142-6193}}
\author[M.~Mancini]{Manuel Mancini~\orcidlink{0000-0003-2142-6193}}

\email{gianmarco.larosa@unipa.it}
\email{manuel.mancini@uclouvain.be; manuel.mancini@unipa.it}

\address[G.~La Rosa, M.~Mancini]{Dipartimento di Matematica e Informatica, Università degli Studi di Palermo, via Archirafi 34, 90123 Palermo, Italy}

\address[M.~Mancini]{Institut de Recherche en Mathématique et Physique, Université catholique de Louvain, chemin du cyclotron 2 bte L7.01.02, B--1348 Louvain-la-Neuve, Belgium.}

% \thanks{$^*$Corresponding Author.\\

\begin{abstract}
We study the notion of the \emph{Lie-holomorph} of a Leibniz algebra, recently introduced by N.~P.~Souris as a generalisation of the classical holomorph construction for Lie algebras. We establish a connection between the Lie-holomorph construction and the Leibniz algebra of biderivations defined by J.-L.~Loday, and we prove that a linear endomorphism is a Lie-derivation if and only if it is simultaneously a derivation and an anti-derivation. As an application, we classify the Lie-holomorph algebras of all low-dimensional non-Lie Leibniz algebras over a field of characteristic different from $2$.
\end{abstract}

\subjclass[2020]{15B30; 16W25; 17A32; 17A36; 17B40}
\keywords{Holomorph; derivation; biderivation; Leibniz algebra; Lie algebra}

\maketitle
%\tableofcontents

\section{Introduction}
In 1965, A.~Blokh introduced algebraic structures under the name of $D$-algebras; they are now known as Leibniz algebras~\cite{blokhLie1965}. Later, in 1993, they were popularised by J.-L.~Loday~\cite{loday1993}, who presented them as a non-anticommutative version of Lie algebras. A right Leibniz algebra $L$ is a vector space equipped with a bilinear product $[-,-]$, usually called \emph{bracket} or \emph{commutator}, such that the right multiplications act as derivations. With this definition, Lie algebras appear as a special class of Leibniz algebras.

The notion of the holomorph of a Lie algebra is well known in Lie theory
(see, for instance,~\cite{jacobsonLie}). More precisely, given a Lie algebra $L$, the vector space $L \times \Der(L)$, where $\Der(L)$ denotes the Lie algebra of derivations of $L$, can be endowed with a Lie algebra structure via the bilinear multiplication
\[
[(x,d),(y,d')]=\left([x,y] + d(y) - d'(x),\,d \circ d' - d' \circ d \right).
\]
The resulting Lie algebra $(L \times \Der(L),[-,-])$ is called the \emph{holomorph}
of $L$, and it is usually denoted by $\hol(L)$. The study of this construction
goes back at least to the end of the nineteenth century and originates in group
theory, where the holomorph encodes, in a single object, a group together with its
automorphism group (see, e.g.,~\cite{Burnside_2012}).

In the context of Leibniz algebras, the notion of holomorph was first introduced by K.~Boyle, K.~C.~Misra, and E.~Stitzinger in~\cite{misra}. With their definition, the authors were able to extend several results from complete Lie algebras to the setting of complete Leibniz algebras. However, in this framework the Leibniz algebra $L$ is not embedded as an ideal in its holomorph, and when $L$ is a Lie algebra, the classical holomorph construction is not recovered.

For these reasons, in this manuscript we adopt the definition of \emph{Lie-holomorph} given by N.~P.~Souris in~\cite{souris}. It turns out that the study of the Lie-holomorph of a Leibniz algebra~$L$ is closely related to the Leibniz algebra of biderivations $\Bider(L)$, which was introduced by J.-L.~Loday in~\cite{loday1993} in terms of pairs consisting of derivations and antiderivations of $L$ satisfying a compatibility condition. This connection represents the core of the present work.

After recalling some basic definitions and preliminaries in \Cref{sec:preliminaries}, we present the main results concerning biderivations and Lie-holomorphs of Leibniz algebras in \Cref{sec:holomorph}. Eventually, in \Cref{sec:classification} we provide a complete classification of the Lie-holomorphs of all low-dimensional non-Lie Leibniz algebras, by applying the results developed here together with the classification of Leibniz algebras of dimension at most four given in~\cite{4-dim-nil, 4-dim-Misra, 4-dim-solv}, and the classification of biderivations given in~\cite{ManciniBider}.

\section{Preliminaries}\label{sec:preliminaries}
We assume that $\bF$ is a field with $\bchar(\bF)\neq2$. For the general theory we refer to~\cite{ayupov2019,loday1993}.

\begin{definition}\cite{loday1993}
A \emph{right Leibniz algebra} over $\bF$ is a vector space $L$ over $\bF$ endowed with a bilinear map (called \emph{bracket} or \emph{commutator}) $\left[-,-\right]\colon L\times L \to L$ which satisfies the \emph{right Leibniz identity}
\[
\left[\left[x,y\right],z\right]=\left[[x,z],y\right]+\left[x,\left[y,z\right]\right],
\]
for any $x,y,z \in L$.
\end{definition}

In the same way we can define a left Leibniz algebra, using the left Leibniz identity
\[
\left[x,\left[y,z\right]\right]=\left[\left[x,y\right],z\right]+\left[y,\left[x,z\right]\right].
\]

A Leibniz algebra that is both left and right is called \emph{symmetric} Leibniz algebra.

\begin{remark}
Given a right (resp.~left) Leibniz algebra $L$, the multiplication
\[
[x,y]^{\op}=[y,x]
\]
defines a left (resp.~right) Leibniz algebra structure on the same underlying vector space. The resulting Leibniz algebra is denoted by $L^{\op}$. Hence, there is an equivalence between left and right Leibniz algebras.

From now on, throughout the rest of the paper, we work with right Leibniz algebras.
\end{remark}
Every Lie algebra canonically defines a Leibniz algebra, and a Leibniz algebra whose bracket is skew-symmetric necessarily satisfies the Jacobi identity, hence is a Lie algebra. This correspondence gives rise to an adjunction (see~\cite{maclane2013}) between the category $\Lie$ of Lie algebras over $\bF$ and the category $\sLeib$ of Leibniz algebras over $\bF$. More precisely, the forgetful functor $i\colon \Lie \to \sLeib$ admits a left adjoint
\[
p \colon \sLeib \to \Lie,
\]
which assigns to each Leibniz algebra $L$ the quotient $L/\Leib(L)$. Here
\[
\Leib(L)=\langle [x,x] \mid x\in L \rangle
\]
denotes the \emph{Leibniz kernel} of $L$, which is the minimal two-sided ideal such that the quotient $L/\Leib(L)$ becomes a Lie algebra.

The left and the right center of a Leibniz algebra $L$ are, respectively,
\[
\Zl(L)=\left\{x\in L \mid \left[x,L\right]=0\right\} \; \text{and} \; \Zr(L)=\left\{x\in L \mid \left[L,x\right]=0\right\},
\]
while the \emph{center} of $L$ is
\[
\Z(L)=\Zl(L)\cap \Zr(L).
\]
One may check that $\Zr(L)$ and $\Z(L)$ are ideals of $L$, whereas the left center is not even a subalgebra. Moreover, $\Leib(L) \subseteq \Zr(L)$.

In~\cite{casas_center}, the authors introduced the notion of \emph{Lie-center} of a Leibniz algebra $L$
\[
\ZL(L)=\lbrace x \in L \mid [x,y]+[y,x]=0, \, \forall y \in L \rbrace.
\]
One may check that $\ZL(L)$ is a characteristic ideal of $L$ containing $Z(L)$. Furthermore, $\ZL(L)=L$ if and only if $L$ is a Lie algebra.

We further recall the definitions of nilpotent and solvable Leibniz algebras.

\begin{definition}
Let $L$ be a Leibniz algebra and let
\[
L^{(0)}=L, \quad L^{(k+1)}=[L^{(k)},L], \quad \forall k \geq 0
\]
be the \emph{lower central series} of $L$. $L$ is \emph{$n$-nilpotent} if $L^{(n-1)} \neq 0$ and $L^{(n)}=0$.
\end{definition}

%\begin{remark}
%Given a left Leibniz algebra $L$, the lower central series is defined as
%\[
%L^{(0)}=L, \quad L^{(k+1)}=[L,L^{(k)}],
%\]
%for any $k \geq 0$.
%\end{remark}

\begin{definition}
Let $L$ be a Leibniz algebra and let
\[
L^{0}=L, \quad L^{k+1}=[L^{k},L^k], \quad \forall k \geq 0
\]
be the \emph{derived series} of $L$. $L$ is \emph{$n$-solvable} if $L^{n-1} \ne 0$ and $L^{n}=0$.
\end{definition}

\begin{remark}
If $L$ is $2$-nilpotent, i.e., if $[x,[y,z]]=[[x,y],z]=0$ for any $x,y,z \in L$, then the derived subalgebra $[L,L]$ lies in the Lie-center $\ZL(L)$. 
\end{remark}

The definition of a derivation for a Leibniz algebra is the same as in the case
of Lie algebras, and in fact it coincides with the standard notion of derivation
in the broader setting of non-associative algebras.

\begin{definition}
Let $L$ be a Leibniz algebra. A \emph{derivation} of $L$ is a linear map $d \colon L \to L$ such that 
\[
d(\left[x,y\right])=\left[d(x),y\right]+\left[x,d(y)\right],
\]
for any $x,y \in L$.
\end{definition}

The right multiplications are particular derivations called \emph{inner derivations}, and an equivalent way to define a Leibniz algebra is to say that the adjoint map $\ad_x=\left[-,x\right]$ is a derivation, for every $x \in L$.

The set $\Der(L)$ is a Lie algebra with the natural bracket given by the commutator $d_1\circ d_2- d_2\circ d_1$. The subset $\Inn(L)$ of all inner derivations of $L$ is an ideal of $\Der(L)$.

The definitions of \emph{anti-derivation} and \emph{biderivation} for a Leibniz algebra were first given by J.-L.~Loday in~\cite{loday1993}.

\begin{definition}\cite{loday1993}
Let $L$ be a Leibniz algebra. An \emph{anti-derivation} of $L$ is a linear map $D \colon L \to L$ such that 
\[
D([x,y])=[D(x),y]-[D(y),x],
\]
for any $x,y \in L$.
\end{definition}

For a left Leibniz algebra, one needs to ask that
\[
D([x,y])=[x,D(y)]-[y,D(x)],
\]
for any $x,y \in L$.

We observe that in the case of Lie algebras, there is no difference between a derivation and an anti-derivation. Moreover, it is easy to check that, for any $x \in L$, the left adjoint map $\Ad_x=[x,-]$ is an anti-derivation.

\begin{remark}\cite{loday1993}
The set $\ADer(L)$ of anti-derivations of a Leibniz algebra $L$ has a $\Der(L)$-module structure with the action
\[
d \cdot D = D \circ d - d \circ D,
\]
for any $d \in \Der(L)$ and for any $D \in \ADer(L)$.
\end{remark}

\begin{remark}
Let $D \colon L \to L$ be an anti-derivation. Then, $D(\Leib(L))=0$ since
\[
D([x,x])=[D(x),x]-[D(x),x]=0,
\]
for any $x \in L$.
\end{remark}

\begin{definition}\cite{loday1993}
Let $L$ be a Leibniz algebra. A \emph{biderivation} of $L$ is a pair
\[
(d,D) \in \Der(L) \times \ADer(L)
\]
such that
\begin{equation}\label{cond}
[x,d(y)]=[x,D(y)],    
\end{equation}
for any $x,y \in L$.
\end{definition}

The set of all biderivations of $L$, denoted by $\Bider(L)$, has a Leibniz algebra structure with the bracket
\[
[(d,D),(d',D')]=(d \circ d' - d' \circ d, D \circ d' - d' \circ D),
\]
for any $(d,D),(d',D') \in \Bider(L)$, and it is possible to define a Leibniz algebra morphism
\[
L \to \Bider(L) \colon x \mapsto (-\ad_x, \Ad_x).
\]
The pair $(-\ad_x, \Ad_x)$ is called \emph{inner biderivation} of $L$, and the set of all inner biderivations forms a Leibniz subalgebra of $\Bider(L)$.

\begin{remark}
Condition~\eqref{cond} is equivalent to requiring that $\Imm(d-D)\subseteq \Zr(L)$. Therefore, it is defined a linear map
\[
\Bider(L) \to \Hom(L,\Zr(L)) \colon (d,D) \mapsto d-D,
\]
where $\Hom(L,\Zr(L))$ denotes the vector space of linear maps from $L$ to $\Zr(L)$.
\end{remark}

\begin{remark}
Biderivations of Leibniz algebras have been studied, among others, in~\cite{CigoliManciniMetere}, where it was proved that the variety $\sLeib$ is \emph{weakly action representable} (see~\cite{WRA,WRAAlg,XabiMancini}), with a weak representing object of a Leibniz algebra $X$ being the biderivation algebra $\Bider(X)$. This means that, for any Leibniz algebra $X$, there exists a natural monomorphism of functors
\[
\tau \colon \SplExt(-,X) \rightarrowtail \Hom(-,\Bider(X)),
\]
where $\SplExt(-,X)$ denotes the functor that maps every object $B$ of $\sLeib$ to the set of isomorphism classes of split extensions of $B$ by $X$. The component 
\[
\tau_B \colon \SplExt(B,X) \rightarrowtail \Hom(B,\Bider(X))
\]
sends any split extension
\[
\begin{tikzcd}
{0} & {X} & {A} & {B} & {0}
\arrow[from=1-1, to=1-2]
\arrow["{k}", tail, from=1-2, to=1-3]
\arrow["{p}", twoheadrightarrow, shift left, from=1-3, to=1-4]
\arrow["{s}", tail, shift left, from=1-4, to=1-3]
\arrow[from=1-4, to=1-5]
\end{tikzcd}
\]
to the Leibniz algebra homomorphism
\[
B \to \Bider(X) \colon b \mapsto (-r_b,l_b),
\]
called \emph{acting morphism}, where the bilinear maps
\[
l \colon B \times X \to X, \quad r \colon X \times B \to X
\]
defined by $l_b(x)=l(b,x)=[s(b),k(x)]$ and $r_b(x)=r(x,b)=[k(x),s(b)]$, denote the \emph{derived action} of $B$ on $X$ (see~\cite{casas, orzech} for more details). Furthermore, one may check that
\[
l_b \circ (l_{b'} - r_{b'})=0,
\]
for any $b,b' \in B$.
\end{remark}

Before concluding this section, we want to recall Remark 2.5 of~\cite{dibartolo2025biderivationscompleteleibnizalgebras}.

\begin{remark}
In the literature on Leibniz algebras, there coexist two different notions of biderivation. Following J.-L.~Loday, biderivations are defined as pairs consisting of a derivation and an antiderivation satisfying a compatibility condition, while more recent works treat biderivations as bilinear maps satisfying Leibniz-type identities in each argument.
	
These definitions are not equivalent, and the objects they produce differ substantially. In this paper, we adopt the former perspective, and throughout we refer to biderivations in the sense of J.-L.~Loday.
\end{remark}

\section{The Lie-holomorph of a Leibniz algebra}\label{sec:holomorph}

In this section we recall the definition of \emph{Lie-holomorph} of a Leibniz algebra, which was recently introduced in~\cite{souris} in order to generalise the holomorph construction for Lie algebras.

Given any Lie algebra $L$, the holomorph $\hol(L)$ is defined as the semidirect product $L \rtimes \Der(L)$ endowed with the bracket
\[
[(x,d),(y,d')]=([x,y]+d(y)-d'(x),[d,d']),
\]
for any $(x,d),(y,d') \in L \rtimes \Der(L)$. One may check that $\hol(L)$ is a Lie algebra, and $L$ embeds as an ideal of $\hol(L)$ via the canonical inclusion
\[
i_1 \colon L \rightarrowtail \hol(L) \colon x \mapsto (x,0).
\]
This construction has analogies with the notion of holomorph in the category $\Grp$ of groups, where the holomorph of a group $G$ is defined as $\hol(G)=G \rtimes \Aut(G)$, with multiplication
\[
(g,\varphi) \cdot (h,\varphi')=(g \cdot \varphi(h),\varphi \circ \varphi').
\]
In fact, both the categories $\Lie$ and $\Grp$ are action representable~\cite{BJK2}: this means that, for any object $X$, there exists an object $\Act(X)$, called the \emph{actor} of~$X$, together with a natural isomorphism of functors
\[
\SplExt(-,X) \cong \Hom(-,\Act(X)).
\]
In the case of Lie algebras, the actor of a Lie algebra $L$ is $\Der(L)$; while for groups, the actor of a group~$G$ is the automorphism group $\Aut(G)$. Hence, the holomorph construction of an object~$X$ of an action representable category is then precisely the semidirect product $\hol(X)=X \rtimes \Act(X)$.

However, the notion of action representable category has been proved to be quite restrictive. For instance, it was proved in~\cite{tim} that the only action representable varieties of non-associative algebras over an infinite field $\bF$, with $\bchar(\bF) \neq 2$, are the varieties of abelian algebras and Lie algebras.

On the other hand, the article~\cite{casas} showed that the weaker notion of \emph{universal strict general actor} (USGA, for short) is available for any Orzech category of interest~\cite{orzech}. For instance, in the category $\sLeib$, $\USGA(L)=\Bider(L)$ for any Leibniz algebra $L$.

It was also proved in~\cite[Theorem 3.9]{casas} that an object $X$ of a category of interest $\bC$ has an actor $\Act(X)$ if and only if the semidirect product $X \rtimes \USGA(X)$ is an object of $\bC$. If this is the case, then $\USGA(X)=\Act(X)$.

As a consequence, since $\sLeib$ is not action representable, the semidirect product $L \rtimes \Bider(L)$ does not  give in general a Leibniz algebra\footnote{It is proved in~\cite[Section 5]{casas} that $L \rtimes \Bider(L)$ is a Leibniz algebra whenever $\Z(L)=0$ or $[L,L]=L$.}. Hence, the holomorph construction $L \rtimes \USGA(L)$ does not extend from Lie algebras to Leibniz algebras. However, two different notions of holomorph in the variety $\sLeib$ are available in the literature.

The authors of~\cite{misra} defined the holomorph of a left Leibniz algebra $L$ as the vector space $L \times \Der(L)$ endowed with the bilinear multiplication
\[
[(x,d),(y,d')]=([x,y]+d(y),[\Ad_x,d']+[d,d'])
\]
for any $(x,d),(y,d') \in L \times \Der(L)$.

This notion has the advantage that certain results for complete Lie algebras extend naturally to complete Leibniz algebras. On the other hand, one may check that $L$ does not embed as an ideal of its holomorph. Moreover, if $L$ is a Lie algebra, the multiplication above does not define a Lie algebra structure on $L \times \Der(L)$, so the construction does not recover the classical holomorph $\hol(L)=L \rtimes \Der(L)$.

In order to solve this issue, N.~P.~Souris considered in~\cite{souris} the ideal
\[
\DerL(L)=\lbrace d \in \Der(L) \mid \Imm(d) \subseteq \ZL(L) \rbrace
\]
of \emph{Lie-derivations} of $L$, and defined the \emph{Lie-holomorph} of a Leibniz algebra $L$ as follows.

\begin{definition}\cite{souris}
Let $L$ be a Leibniz algebra. The \emph{Lie-holomorph} of $L$ is the semidirect product
\[
\holL(L)=L \rtimes \DerL(L)
\]
endowed with the bracket
\[
[(x,d),(y,d')]=([x,y]+d(y)-d'(x),[d,d']),
\]
for any $(x,d),(y,d') \in L \rtimes \Der(L)$.
\end{definition}

He also proved that $\holL(L)$ is a Leibniz algebra, and that $L$ can be embedded as an ideal into $\holL(L)$ via the canonical map $i_1 \colon x \mapsto (x,0)$. Furthermore, if $L$ is a Lie algebra, then $\DerL(L)=\Der(L)$, but the converse does not hold in general as the following example shows.

\begin{example}
Let $L$ be the $3$-dimensional Leibniz algebra with basis $\lbrace e_1,e_2,e_3 \rbrace$ and non-zero commutators $[e_3,e_2]=-[e_2,e_3]=e_1$ and $[e_3,e_3]=-e_1$. One has that $\Leib(L)=\langle e_1 \rangle$, hence $L$ is not a Lie algebra. We want to show that $\DerL(L)=\Der(L)$.
	
We firstly observe that $e_1 \in Z(L)$. Moreover, $x=x_2 e_2 + x_3 e_3 \in \ZL(L)$ if and only if~$x_3=0$. Thus, $\ZL(L)=\langle e_1,e_2 \rangle$.
	
From the classification of biderivations of low-dimensional Leibniz algebras given in~\cite{ManciniBider}, it turns out that $\Der(L)$ can be represented as the Lie subalgebra
\[
\left\lbrace
\begin{pmatrix}
0 & 0 & b \\
0 & a & c \\
0 & 0 & 0 \\
\end{pmatrix} \; \middle| \; a,b,c \in \bF \right\rbrace
\]
of $\mathfrak{gl}(3,\bF)$. Thus, $\Imm(d) \subseteq \ZL(L)$ for any $d \in \Der(L)$, and $\DerL(L)=\Der(L)$.
\end{example}

One may check that the Leibniz algebra $L$ considered above is symmetric. This is not a coincidence, as the following result shows.

\begin{proposition}\label{prop_sym}
Let $L$ be a Leibniz algebra. If $\DerL(L)=\Der(L)$, then $L$ is symmetric.
\end{proposition}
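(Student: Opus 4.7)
The plan is to show that the hypothesis $\DerL(L)=\Der(L)$ forces the derived subalgebra $[L,L]$ to lie inside the Lie-center $\ZL(L)$, and then to deduce the left Leibniz identity from this containment together with the right Leibniz identity already satisfied by $L$.

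First I would apply the hypothesis to the inner derivations. For each $x\in L$, the map $\ad_x=[-,x]$ is a derivation of the right Leibniz algebra $L$, hence by assumption it is a Lie-derivation, so $\Imm(\ad_x)\subseteq \ZL(L)$. Letting $x$ vary, this gives $[L,L]\subseteq \ZL(L)$. Unpacking the definition of $\ZL(L)$, this means
\[
[[a,b],c]+[c,[a,b]]=0
\]
for all $a,b,c\in L$.

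Next I would derive the left Leibniz identity. Using the right Leibniz identity in the form
\[
[x,[y,z]]=[[x,y],z]-[[x,z],y]
\]
and replacing each of the two bracketed expressions on the right by its negative via $[L,L]\subseteq \ZL(L)$ (that is, $[[x,y],z]=-[z,[x,y]]$ and $[[x,z],y]=-[y,[x,z]]$), one obtains
\[
[x,[y,z]]=-[z,[x,y]]+[y,[x,z]].
\]
Since $[[x,y],z]=-[z,[x,y]]$ again by the containment, the right-hand side is exactly $[[x,y],z]+[y,[x,z]]$, which is the left Leibniz identity. Thus $L$ is simultaneously a left and right Leibniz algebra, i.e.~symmetric.

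There is no real obstacle; the only subtle point is recognising that the hypothesis, although a priori about arbitrary derivations, is already exhausted by its consequences on the inner derivations $\ad_x$, which suffice to pin down $[L,L]\subseteq\ZL(L)$. Everything else is a short manipulation of the two defining identities.
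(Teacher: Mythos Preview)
Your proof is correct and follows essentially the same approach as the paper: both apply the hypothesis to the inner derivations $\ad_x$ to obtain $[L,L]\subseteq\ZL(L)$, and then combine this with the right Leibniz identity (written as $[x,[y,z]]=[[x,y],z]-[[x,z],y]$, which is precisely the anti-derivation property of $\Ad_x$) to derive the left Leibniz identity. The only difference is cosmetic: you substitute $[[x,y],z]=-[z,[x,y]]$ and then immediately undo that substitution, whereas the paper performs only the single necessary swap $[[x,z],y]=-[y,[x,z]]$.
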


\begin{proof}
If $\DerL(L)=\Der(L)$, then the left adjoint map $\Ad_x=[x,-]$ is a derivation, for any $x \in L$. Indeed, one has
\[
[x,[y,z]]=[[x,y],z]-[[x,z],y],
\]
for any $y,z \in L$, since $\Ad_x \in \ADer(L)$, and
\[
[[x,z],y]=-[y,[x,z]]
\]
since $\ad_z=[-,z] \in \DerL(L)$. Hence, the left Leibniz identity
\[
[x,[y,z]]=[[x,y],z]+[y,[x,z]]
\]
holds in $L$.
\end{proof}

The following example shows that the converse of \Cref{prop_sym} does not hold in general.

\begin{example}\label{ex_d1}
Let $\mathfrak{d}_1$ be the $4$-dimensional \emph{Dieudonné algebra}~\cite{LaRosaMancini1}, i.e., $\mathfrak{d}_1$ is a $2$-nilpotent algebra with basis $\lbrace e_1,e_2,e_3,z \rbrace$ and non-zero brackets
\[
[e_1,e_3]=[e_2,e_3]=-[e_3,e_1]=[e_3,e_2]=z.
\]
One has that $\mathfrak{d}_1$ is a symmetric Leibniz algebra with $\Z(\mathfrak{d}_1)=[\mathfrak{d}_1,\mathfrak{d}_1]=\langle z \rangle$. Furthermore, $x=x_1 e_1 + x_2 e_2 + x_3 e_3 \in \ZL(\mathfrak{d}_1)$ if and only if $x_2=x_3=0$. Thus, $\ZL(\mathfrak{d_1})=\langle e_1, z \rangle$.
	
We further recall from~\cite{LaRosaMancini2, ManciniBider} that $\Bider(\mathfrak{d}_1)$ can be represented by the pairs of matrices of the form
\[
(d,D)=\left( \begin{pmatrix}
a & 0 & c & 0 \\
0 & a & 0 & 0 \\
0 & 0 & b & 0 \\
a_1 & a_2 & a_3 & a+b
\end{pmatrix},\begin{pmatrix}
\frac{a+b}{2} & \frac{b-a}{2} & c+d & 0 \\
\frac{b-a}{2} & \frac{a+b}{2} & d & 0 \\
0 & 0 & b & 0 \\
b_1 & b_2 & b_3 & 0
\end{pmatrix} \right),
\]
and the inner biderivations consist of the matrices
\[
\left( \begin{pmatrix}
0 & 0 & 0 & 0 \\
0 & 0 & 0 & 0 \\
0 & 0 & 0 & 0 \\
a_1 & a_1 & a_3 & 0
\end{pmatrix},\begin{pmatrix}
0 & 0 & 0 & 0 \\
0 & 0 & 0 & 0 \\
0 & 0 & 0 & 0 \\
a_1 & -a_1 & b_3 & 0
\end{pmatrix} \right).
\]
Moreover, one may check that $\Imm(d) \subseteq \ZL(\mathfrak{d}_1)$ if and only if $a=b=0$. Thus, $\DerL(\mathfrak{d}_1)$ can be represented as the subalgebra
\[
\left\lbrace \begin{pmatrix}
0 & 0 & c & 0 \\
0 & 0 & 0 & 0 \\
0 & 0 & 0 & 0 \\
a_1 & a_2 & a_3 & 0 \\
\end{pmatrix} \; \middle| \; a_1,a_2,a_3,c \in \bF \right\rbrace
\]
of $\mathfrak{gl}(4,\bF)$, which has basis $\lbrace e_{13},e_{41},e_{42},e_{43} \rbrace$, where $e_{ij}$ denotes the elementary matrix with~$1$ in the $(i,j)$-entry and zeros elsewhere, and whose only non-zero brackets are $[e_{41},e_{13}]=-[e_{13},e_{41}]=e_{43}$. 
	
Thus, $\Inn(\mathfrak{d}_1) \subsetneq \DerL(\mathfrak{d}_1) \subsetneq \Der(\mathfrak{d}_1)$ and the Lie-holomorph $\holL(\mathfrak{d}_1)$ is the $8$-dimensional Leibniz algebra with basis
\[
\lbrace \widetilde{e}_1,\widetilde{e}_2,\widetilde{e}_3,\widetilde{z}, \ov{e_{13}},\ov{e_{41}},\ov{e_{42}},\ov{e_{43}}\rbrace,
\]
where $\widetilde{e}_i=(e_i,0)$, $\widetilde{z}=(z,0)$, $\ov{e}_{hk}=(0,e_{hk})$, and $[\holL(\mathfrak{d}_1),\holL(\mathfrak{d}_1)]=\langle \widetilde{e}_1,\widetilde{z},\ov{e_{43}} \rangle$.
\end{example}

\begin{remark}
In general, the set $\Inn(L)$ of inner derivations of $L$ is not necessarily contained in $\DerL(L)$ (see \Cref{2-dim}, where it is shown that the ideal of Lie-derivations of a non-Lie Leibniz algebra may even be trivial). However, the inclusion $\Inn(L) \subseteq \DerL(L)$ holds when $L$ is $2$-nilpotent. Indeed, in this case one has $[z,[y,x]]=[[y,x],z]=0$, for any $x,y,z \in L$, and therefore $\Imm(\ad_x)=[L,x] \subseteq \ZL(L)$.
\end{remark}

We now want to show a connection between the construction of the Lie-holomorph algebra introduced by N.~P.~Souris in~\cite{souris} and the Leibniz algebra of biderivations defined by J.-L.~Loday in~\cite{loday1993}, which uses the fact that the category $\sLeib$ is weakly action representable.

\begin{remark}
Given any Leibniz algebra $L$, there is a canonical split extension
\begin{equation}\label{can_spl}
\begin{tikzcd}
{0} & {L} & {\holL(L)} & {\DerL(L)} & {0}
\arrow[from=1-1, to=1-2]
\arrow["{i_1}", tail, from=1-2, to=1-3]
\arrow["{p_2}", twoheadrightarrow, shift left, from=1-3, to=1-4]
\arrow["{i_2}", tail, shift left, from=1-4, to=1-3]
\arrow[from=1-4, to=1-5]
\end{tikzcd}    
\end{equation}
in the category $\sLeib$, where $i_1(x)=(x,0)$, $i_2(d)=(0,d)$ and $p_2(x,d)=d$, for any $x \in L$ and for any $d \in \DerL(L)$. Thus, $\DerL(L) \cong \holL(L)/L$, and the acting morphism associated with~\eqref{can_spl} is the Leibniz algebra monomorphism
\[
\DerL(L) \rightarrowtail \Bider(L) \colon d \mapsto (-d,-d),
\]
which gives a canonical embedding of $\DerL(L)$ into $\Bider(L)$. Actually, one can prove the following.
\end{remark}

\begin{theorem}
Let $L$ be a Leibniz algebra and let $d \in \Der(L)$. The following statements are equivalent:
\begin{tfae}
\item  $d \in \DerL(L)$.
\item  $d \in \ADer(L)$.
\item $(d,d) \in \Bider(L)$.
\end{tfae}
\end{theorem}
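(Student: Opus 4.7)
The equivalence is a direct unpacking of the three definitions, so my plan is to establish the three implications by elementary manipulations of the bracket, treating the easy equivalence (ii) $\Leftrightarrow$ (iii) first and then proving (i) $\Leftrightarrow$ (ii).

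First I would observe that (ii) $\Leftrightarrow$ (iii) is essentially tautological. By definition, $(d,d) \in \Bider(L)$ requires $d \in \Der(L)$, $d \in \ADer(L)$, and the compatibility $[x,d(y)]=[x,d(y)]$ for all $x,y \in L$. Since the latter holds trivially and $d \in \Der(L)$ is part of our hypothesis, this condition reduces exactly to $d \in \ADer(L)$.

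Next I would handle (i) $\Leftrightarrow$ (ii) by comparing the derivation and anti-derivation identities. Suppose $d \in \DerL(L)$. Then for every $y \in L$, $d(y) \in \ZL(L)$, so $[x,d(y)]+[d(y),x]=0$ for all $x$. Combining with the derivation identity yields
\[
d([x,y]) = [d(x),y] + [x,d(y)] = [d(x),y] - [d(y),x],
\]
showing $d \in \ADer(L)$. Conversely, if $d$ is simultaneously a derivation and an anti-derivation, subtracting the two identities
\[
[d(x),y] + [x,d(y)] = d([x,y]) = [d(x),y] - [d(y),x]
\]
gives $[x,d(y)] + [d(y),x] = 0$ for all $x,y \in L$, which is precisely the condition $d(y) \in \ZL(L)$. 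Hence $\Imm(d) \subseteq \ZL(L)$ and $d \in \DerL(L)$.

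I do not anticipate any real obstacle: once the Lie-center condition is written as $[x,z]+[z,x]=0$ and placed next to the derivation and anti-derivation identities, the two differ only by the sign of the second summand and the equivalence is forced. The only subtle point to flag is that the compatibility condition in the definition of biderivation becomes vacuous for diagonal pairs $(d,d)$, which is what makes (iii) collapse to (ii); everything else is direct substitution.
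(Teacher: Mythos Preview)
Your proposal is correct and follows essentially the same approach as the paper: both arguments reduce the anti-derivation condition to the Lie-center condition by comparing the derivation and anti-derivation identities, and both note that the compatibility clause in the definition of biderivation is vacuous for the diagonal pair $(d,d)$. The only cosmetic difference is that the paper organizes the argument as the cycle (i)$\Rightarrow$(iii)$\Rightarrow$(ii)$\Rightarrow$(i), whereas you prove the two biconditionals (ii)$\Leftrightarrow$(iii) and (i)$\Leftrightarrow$(ii) directly.
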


\begin{proof}{\ }
\begin{itemize}
\item [(i)$\Rightarrow$(iii)] If $d \in \DerL(L)$, then $[d(x),y]+[y,d(x)]=0$, for any $x,y \in L$. This implies that~$d$ is also an anti-derivation. Thus, the pair $(d,d)$ is a biderivation of $L$.
\item[(iii)$\Rightarrow$(ii)] If $(d,d) \in \Bider(L)$, then trivially $d \in \ADer(L)$.
\item[(ii)$\Rightarrow$(i)] If $d \in \ADer(L)$, then
\[
d([x,y])=[d(x),y]-[d(y),x],
\]
for any $x,y \in L$. Since $d$ is also a derivation, one gets
\[
[d(y),x]=-[x,d(y)],
\]
 which implies that $d \in \DerL(L)$.
\end{itemize}    
\end{proof}

As an immediate consequence, we get that
\[
\DerL(L)=\Der(L) \cap \ADer(L) = \lbrace d \in \Der(L)\mid(d,d) \in \Bider(L) \rbrace.
\]

\section{Lie-holomorphs of low-dimensional Leibniz algebras}\label{sec:classification}

We now aim to study in detail the Lie-holomorph algebras of low-dimensional non-Lie Leibniz algebras over a field $\bF$, with $\bchar(\bF) \neq 2$. To do this, we use the classification of biderivations of such algebras given in~\cite{ManciniBider}.

Since there is no non-abelian Leibniz algebra in dimension $1$, we start with $2$-dimensional Leibniz algebras.

\subsection{Lie-holomorphs of $2$-dimensional Leibniz algebras}\label{2-dim}

Let $\dim_\bF L=2$. Then, as shown in~\cite{2-dimCuvier} by C.~Cuvier, up to isomorphism there are only two non-Lie Leibniz algebra structures on $L$:

\begin{itemize}
\item $L_A$:~nilpotent symmetric Leibniz algebra with basis $\lbrace e_1,e_2 \rbrace$ and multiplication table 
\[
[e_2,e_2]=e_1.
\]
\item $L_B$:~solvable Leibniz algebra with basis $\lbrace e_1,e_2 \rbrace$ and multiplication table
\[
[e_1,e_2]=[e_2,e_2]=e_1.
\]
\end{itemize}

We observe that $L_B$ is the only non-split non-nilpotent non-Lie Leibniz algebra with $1$-dimensional derived subalgebra (see~\cite[Corollary 2.3]{LaRosaManciniDiBartolo}).

One may check that $\ZL(L_A)=\Z(L_A)=\langle e_1 \rangle$ and $\ZL(L_B)=\Z(L_B)=0$. Furthermore, we recall from~\cite[Section 4.1]{ManciniBider} that 
\[
\Bider(L_A)=\left\lbrace \left( \begin{pmatrix}
2a & b \\
0 & a \\
\end{pmatrix},\begin{pmatrix}
0 & c \\
0 & a \\
\end{pmatrix} \right) \; \middle| \; a,b,c \in \bF \right\rbrace
\]
and
\[
\Bider(L_B)=\left\lbrace \left( \begin{pmatrix}
a & a \\
0 & 0 \\
\end{pmatrix},\begin{pmatrix}
0 & b \\
0 & 0 \\
\end{pmatrix} \right) \; \middle| \; a,b \in \bF \right\rbrace.
\]
Thus, one has $\DerL(L_A)=\Inn(L_A)=\langle e_{12} \rangle$ and $\DerL(L_B)=0$. It follows that
\[
\holL(L_A)=L_A \rtimes \Inn(L_A)=\langle \widetilde{e}_1,\widetilde{e}_2,\ov{e_{12}} \rangle,
\]
is a nilpotent Leibniz algebra with non-trivial brackets
\[
[\widetilde{e}_2,\widetilde{e}_2]=[\ov{e_{12}},\widetilde{e}_2]=-[\widetilde{e}_2,\ov{e_{12}}]=\widetilde{e}_1,
\]
while $\holL(L_B) \cong L_B$.

\subsection{Lie-holomorphs of $3$-dimensional Leibniz algebras}

The $3$-dimensional Leibniz $\mathbb{C}$-algebras and their derivations were classified in~\cite{3-dimLadra, low_dim}, while a more general classification of $3$-dimensional Leibniz algebras over a field $\bF$ with $\operatorname{char}(\bF)\neq 2$ can be found in~\cite{3-dimF}. 
Throughout this paper, we adopt the same numbering for the list of Leibniz algebras as in~\cite{3-dimF}.

Let $\dim_{\bF} L=3$ and let $\lbrace e_1,e_2,e_3 \rbrace$ be a basis of $L$ over $\bF$. The list of non-isomorphic $3$-dimensional non-Lie Leibniz $\bF$-algebras is the following.

\begin{table}[H]
	\centering
	\caption{List of non-isomorphic $3$-dimensional non-Lie Leibniz algebras.}
	\label{table:list_of_3_dim_Leib}
	\renewcommand{\arraystretch}{1.2}
	\begin{tabular}{|l|r|}
		\hline
		\textbf{Leibniz algebra}                                                                                                                                                          & \textbf{Non-trivial brackets}                                   \\
		\hline
		$L_1$                                                                                                                                                                             & $[e_1,e_3]=-2e_1,\; [e_2,e_2]=e_1,\; [e_2,e_3]=-[e_3,e_2]=-e_2$ \\
		\hline
		$L_2(\alpha),\; \alpha \neq 0$                                                                                                                                                    & $[e_1,e_3]=\alpha e_1,\; [e_2,e_3]=-[e_3,e_2]=-e_2$             \\
		\hline
		$L_3$                                                                                                                                                                             & $[e_2,e_3]=-[e_3,e_2]=-e_2,\; [e_3,e_3]=e_1$                    \\
		\hline
		$L_4$                                                                                                                                                                             & $[e_2,e_2]=[e_3,e_3]=e_1$                                       \\
		\hline
		$L_5(\alpha)$\tablefootnote{\label{note1}$\alpha$ is a non-trivial representative of the quotient group $(\bF \setminus \lbrace 0 \rbrace)/(\bF^2 \setminus \lbrace 0 \rbrace)$.} & $[e_2,e_2]=e_1,\; [e_3,e_3]=\alpha e_1$                         \\
		\hline
		$L_6(\alpha),\; \alpha \neq 0$                                                                                                                                                    & $[e_2,e_2]=[e_2,e_3]=e_1,\; [e_3,e_3]=\alpha e_1$               \\
		\hline
		$L_7$                                                                                                                                                                             & $[e_2,e_3]=e_1$                                                 \\
		\hline
		$L_8$                                                                                                                                                                             & $[e_1,e_3]=e_2,\; [e_2,e_3]=e_1$                                \\
		\hline
		$L_{9}(\alpha)$\footref{note1}                                                                                                                                                    & $[e_1,e_3]=e_2,\; [e_2,e_3]=\alpha e_1$                         \\
		\hline
		$L_{10}(\alpha),\; \alpha \neq 0$                                                                                                                                                 & $[e_1,e_3]=e_2,\; [e_2,e_3]=\alpha e_1+e_2$                     \\
		\hline
		$L_{11}$                                                                                                                                                                          & $[e_1,e_3]=e_1,\; [e_2,e_3]=e_2$                                \\
		\hline
		$L_{12}$                                                                                                                                                                          & $[e_1,e_3]=e_2,\; [e_3,e_3]=e_1$                                \\
		\hline
		$L_{13}$                                                                                                                                                                          & $[e_1,e_3]=e_1+e_2,\; [e_3,e_3]=e_1$                            \\
		\hline
	\end{tabular}
\end{table}

\begin{remark}
	The Lie-holomorph $\holL(L_A)$ is isomorphic to the $3$-dimensional Leibniz algebra $L_6\left(\tfrac{1}{4}\right)$ via the linear map 
	\[
		\holL(L_A) \to L_6\left(\frac{1}{4}\right)
	\]
	defined by
	\[
		\widetilde{e}_1 \mapsto \frac{1}{4} e_1, 
		\qquad 
		\widetilde{e}_2 \mapsto e_3, 
		\qquad 
		\overline{e_{12}} \mapsto e_2 - \frac{1}{2} e_3.
	\]
\end{remark}

In the following result we summarise the Leibniz algebras of biderivations associated with the algebras under consideration.

\begin{theorem}\cite{ManciniBider}\label{thm_bider}
The Leibniz algebras of biderivations of $3$-dimensional non-Lie Leibniz algebras over $\bF$ can be described as follows:
	\begin{itemize}[itemsep=0.5em]
		\item  $\Bider(L_1)=\left\lbrace \left(
		      \begin{pmatrix}
		      	2a & b & 0 \\
		      	0  & a & b \\
		      	0  & 0 & 0 \\
		      \end{pmatrix},
		      \begin{pmatrix}
		      	0 & -b & c \\
		      	0 & a  & b \\
		      	0 & 0  & 0 \\
		      \end{pmatrix} \right) \; \middle| \; a,b,c \in \bF \right\rbrace$.
		\item  $\Bider(L_2(\alpha))=\left\lbrace \left( \begin{pmatrix}
		      a & 0 & 0 \\
		      0 & b & c \\
		      0 & 0 & 0 \\
		\end{pmatrix},
		\begin{pmatrix}
			0 & d & e \\
			0 & b & c \\
			0 & 0 & 0 \\
		\end{pmatrix} \right) \; \middle| \; a,b,c,d,e \in \bF \right\rbrace$, $\alpha \in \bF \setminus \lbrace -1 \rbrace$.
		\item  $\Bider(L_2(-1))=\left\lbrace \left(
		      \begin{pmatrix}
		      	a & 0 & 0 \\
		      	0 & b & c \\
		      	0 & 0 & 0 \\
		      \end{pmatrix},
		      \begin{pmatrix}
		      	0 & 0 & d \\
		      	0 & b & c \\
		      	0 & 0 & 0 \\
		      \end{pmatrix} \right) \; \middle| \; a,b,c,d \in \bF \right\rbrace$.
		\item  $\Bider(L_3)=\left\lbrace \left( 
		      \begin{pmatrix}
		      	0 & 0 & b \\
		      	0 & a & c \\
		      	0 & 0 & 0 \\
		      \end{pmatrix},
		      \begin{pmatrix}
		      	0 & 0 & d \\
		      	0 & a & c \\
		      	0 & 0 & 0 \\
		      \end{pmatrix} \right) \; \middle| \; a,b,c,d \in \bF \right\rbrace$.
		\item  $\Bider(L_4)=\left\lbrace \left(
		      \begin{pmatrix}
		      	2a & b & c \\
		      	0  & a & 0 \\
		      	0  & 0 & a \\
		      \end{pmatrix},
		      \begin{pmatrix}
		      	0 & d & e \\
		      	0 & a & 0 \\
		      	0 & 0 & a \\
		      \end{pmatrix} \right) \; \middle| \; a,b,c,d,e \in \bF \right\rbrace$;
		\item  $\Bider(L_5(\alpha))=\left\lbrace \left( \begin{pmatrix}
		      2a & b & d \\
		      0 & a & -\alpha c \\
		      0 & c & a \\
		\end{pmatrix},
		\begin{pmatrix}
			0 & e & f        \\
			0 & a & \alpha c \\
			0 & c & a        \\
		\end{pmatrix} \right) \; \middle| \; a,b,c,d,e,f \in \bF \right\rbrace$.
		\item  $\Bider(L_6(\alpha))=\left\lbrace \left( \begin{pmatrix}
		      \gamma a & b & c \\
		      0 & a & \frac{a}{2} \\
		      0 & -\frac{a}{2\alpha} & (\gamma-1)a \\
		\end{pmatrix},
		\begin{pmatrix}
			0 & d                  & e           \\
			0 & a                  & \frac{a}{2} \\
			0 & -\frac{a}{2\alpha} & (\gamma-1)a \\
		\end{pmatrix} \right) \; \middle| \; a,b,c,d,e \in \bF \right\rbrace$,\\ \\ where $\gamma=\frac{4\alpha-1}{2\alpha}$. \\
		\item  $\Bider(L_7)=\left\lbrace \left(
		      \begin{pmatrix}
		      	a+b & c & d \\
		      	0   & a & 0 \\
		      	0   & 0 & b \\
		      \end{pmatrix},
		      \begin{pmatrix}
		      	0 & c & d \\
		      	0 & 0 & e \\
		      	0 & 0 & b \\
		      \end{pmatrix} \right) \; \middle| \; a,b,c,d,e \in \bF \right\rbrace$.
		\item  $\Bider(L_8)=\left\lbrace \left(
		      \begin{pmatrix}
		      	a & b & 0 \\
		      	b & a & 0 \\
		      	0 & 0 & 0 \\
		      \end{pmatrix},
		      \begin{pmatrix}
		      	0 & 0 & c \\
		      	0 & 0 & d \\
		      	0 & 0 & 0 \\
		      \end{pmatrix} \right) \; \middle| \; a,b,c,d \in \bF \right\rbrace$.
		\item  $\Bider(L_9(\alpha))=\left\lbrace \left( \begin{pmatrix}
		      a & \alpha b & 0 \\
		      b & a & 0 \\
		      0 & 0 & 0 \\
		\end{pmatrix},
		\begin{pmatrix}
			0 & 0 & c \\
			0 & 0 & d \\
			0 & 0 & 0 \\
		\end{pmatrix} \right) \; \middle| \; a,b,c,d \in \bF \right\rbrace$.
		\item  $\Bider(L_{10}(\alpha))=\left\lbrace \left( \begin{pmatrix}
		      a & \alpha b & 0 \\
		      b & a+b & 0 \\
		      0 & 0 & 0 \\
		\end{pmatrix},
		\begin{pmatrix}
			0 & 0 & c \\
			0 & 0 & d \\
			0 & 0 & 0 \\
		\end{pmatrix} \right) \; \middle| \; a,b,c,d \in \bF \right\rbrace$.
		\item  $\Bider(L_{11})=\left\lbrace \left( \begin{pmatrix}
		      a & 0 & 0 \\
		      0 & b & 0 \\
		      0 & 0 & 0 \\
		\end{pmatrix},
		\begin{pmatrix}
			0 & 0 & c \\
			0 & 0 & d \\
			0 & 0 & 0 \\
		\end{pmatrix} \right) \; \middle| \; a,b,c,d \in \bF \right\rbrace$.
		\item  $\Bider(L_{12})=\left\lbrace \left( \begin{pmatrix}
		      2a & 0 & b \\
		      b & 3a & c \\
		      0 & 0 & a \\
		\end{pmatrix},
		\begin{pmatrix}
			0 & 0 & d \\
			0 & 0 & e \\
			0 & 0 & a \\
		\end{pmatrix} \right) \; \middle| \; a,b,c,d,e \in \bF \right\rbrace$.
		\item  $\Bider(L_{13})=\left\lbrace \left( \begin{pmatrix}
		      a & 0 & a \\
		      a & 0 & b \\
		      0 & 0 & 0 \\
		\end{pmatrix},
		\begin{pmatrix}
			0 & 0 & c \\
			0 & 0 & d \\
			0 & 0 & 0 \\
		\end{pmatrix} \right)  \; \middle| \; a,b,c,d \in \bF \right\rbrace$.
	\end{itemize} \noproof
\end{theorem}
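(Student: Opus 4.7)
The approach is direct computation: for each of the thirteen algebras $L_i$ listed in \Cref{table:list_of_3_dim_Leib}, I would fix the given basis $\lbrace e_1,e_2,e_3 \rbrace$, write general linear endomorphisms $d$ and $D$ as $3 \times 3$ matrices with unknown entries, and turn the biderivation axioms into a linear system in those $18$ parameters.

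More precisely, for each ordered pair $(e_j,e_k)$ of basis vectors I would impose the derivation identity $d([e_j,e_k]) = [d(e_j),e_k] + [e_j,d(e_k)]$, the anti-derivation identity $D([e_j,e_k]) = [D(e_j),e_k] - [D(e_k),e_j]$, and the compatibility $[e_j,d(e_k)] = [e_j,D(e_k)]$, then equate coefficients in the basis on both sides. Since each non-trivial bracket in the table involves at most two basis vectors and outputs a single basis element, the resulting systems are small and can be solved by inspection or elementary row reduction.

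Several structural observations cut down the work. The inclusion $\Leib(L_i) \subseteq \Zr(L_i)$, together with the fact that every anti-derivation vanishes on $\Leib(L_i)$, already zeroes out columns of $D$ in many cases (most visibly, the first column, since $e_1 \in \Leib(L_i)$ for almost every $L_i$ in the list). The compatibility condition constrains only $d(e_k) - D(e_k)$ modulo $\Zr(L_i)$, which for most of these algebras has codimension one, explaining why in the stated answers the matrices of $d$ and $D$ coincide on all but their first column. Finally, after guessing a candidate family of solutions from the brackets that are easiest to analyse (usually those involving $e_3$, which often acts as the non-nilpotent element), verification of the three axioms on the remaining basis pairs is a short check.

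The main obstacle is the parameter-dependent families $L_2(\alpha)$, $L_5(\alpha)$, $L_6(\alpha)$, $L_9(\alpha)$ and $L_{10}(\alpha)$, where one must track how the linear system depends on $\alpha$ and isolate the degenerate values at which $\dim_{\bF} \Bider(L_i(\alpha))$ jumps; the statement already highlights $\alpha = -1$ for $L_2$, where an extra off-diagonal entry in $D$ is killed, reflecting a change in the eigenspace structure of $\ad_{e_3}$. For $L_6(\alpha)$ the unusual parameter $\gamma = (4\alpha-1)/(2\alpha)$ emerges from solving the derivation identities imposed simultaneously by $[e_2,e_2]$, $[e_2,e_3]$ and $[e_3,e_3]$, and it is this interaction, rather than any single equation, that forces the specific diagonal entries. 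Once these parametric subtleties are resolved, the remaining algebras require only routine linear algebra.
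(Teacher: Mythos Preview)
The paper does not prove this theorem at all: it is stated with the citation \cite{ManciniBider} and closed immediately by \texttt{\textbackslash noproof}, so the result is simply imported from the reference. Your proposal, by contrast, sketches an actual proof by direct linear-algebra computation, which is the standard and correct way to obtain such a classification; there is nothing to compare against in the present paper beyond noting that you are supplying what the authors deliberately omitted.

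One small slip worth flagging: the compatibility condition $[x,d(y)]=[x,D(y)]$ forces $d(y)-D(y)\in\Zr(L)$, which for most algebras in the list means $d-D$ has nonzero entries only in the first \emph{row} (the $e_1$-coordinate), not the first column as you wrote. This is visible already in your own description of $\Bider(L_1)$ or $\Bider(L_7)$. It does not affect the method, but be careful with the row/column convention when you carry out the computations.
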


We now aim to describe the ideals $\ZL(L)$ and $\DerL(L)$ for every Leibniz algebra $L_i$ listed in \Cref{table:list_of_3_dim_Leib}.

\begin{proposition}
	For each Leibniz algebra $L_i$ listed in \Cref{table:list_of_3_dim_Leib}, the corresponding Lie-center $\ZL(L_i)$ is listed below:
	\begin{itemize}
		\item  $\ZL(L_1)=\ZL(L_8)=\ZL(L_9(\alpha))=\ZL(L_{10}(\alpha))=\ZL(L_{11})=0$.
		\item  $\ZL(L_2(\alpha))=\ZL(L_{12})=\ZL(L_{13})=\langle e_2 \rangle$.
		\item  $\ZL(L_3)=\langle e_1,e_2\rangle$.
		\item  $\ZL(L_4)=\ZL(L_5(\alpha))=\ZL(L_7)=\langle e_1 \rangle$.
		\item  $\ZL(L_6(\alpha))=\langle e_1,e_2-2e_3 \rangle$.
	\end{itemize}
\end{proposition}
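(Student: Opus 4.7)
The plan is to verify each line of the list by direct linear algebra. Since the defining relation $[x,y]+[y,x]=0$ is bilinear in $y$, it suffices to impose it for $y$ running over the basis $\{e_1,e_2,e_3\}$. For $x=x_1e_1+x_2e_2+x_3e_3$ and $y=e_k$ one has
\[
[x,e_k]+[e_k,x]=\sum_{j=1}^{3}x_j\bigl([e_j,e_k]+[e_k,e_j]\bigr),
\]
so only the \emph{symmetric part} $s_{jk}:=[e_j,e_k]+[e_k,e_j]$ of the bracket contributes, and the coefficients of the resulting $3\times 3$ linear system in $(x_1,x_2,x_3)$ can be read off directly from the multiplication table of each $L_i$.

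For each algebra in \Cref{table:list_of_3_dim_Leib} I would first tabulate the non-zero $s_{jk}$, which consist of the diagonal squares $[e_j,e_j]$ together with the off-diagonal pairs on which the bracket fails to be anti-symmetric, and then compute the common kernel of the three equations obtained for $k=1,2,3$. As a template, in $L_2(\alpha)$ the only non-anti-symmetric off-diagonal pair is $(1,3)$ with $s_{13}=\alpha e_1$; the $y=e_1$ and $y=e_3$ equations read $\alpha x_3=0$ and $\alpha x_1=0$ respectively, and since $\alpha\neq 0$ one immediately gets $\ZL(L_2(\alpha))=\langle e_2\rangle$. The nilpotent algebras $L_3,L_4,L_5(\alpha),L_6(\alpha),L_7$ all have bracket image inside $\langle e_1\rangle$, so $e_1$ belongs to $\ZL$ automatically and the extra generators are obtained as the kernel of a linear form on $\langle e_2,e_3\rangle$ whose coefficients are the $s_{jk}$. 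The algebras $L_8$--$L_{11}$ have an essentially anti-symmetric action of $e_3$ on $\langle e_1,e_2\rangle$: the $y=e_1,e_2$ equations force $x_3=0$ and the $y=e_3$ equation forces $x_1=x_2=0$, yielding $\ZL=0$. The remaining mixed algebras $L_1,L_{12},L_{13}$ are solved by the same method applied to the full $3\times 3$ system.

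The only point requiring any attention is the handling of the parameter families $L_2(\alpha),L_5(\alpha),L_6(\alpha),L_9(\alpha),L_{10}(\alpha)$: one must check that the rank of the coefficient matrix is locally constant in $\alpha$ over the admissible range specified in the table, so that the listed $\ZL$ is uniformly valid in the parameter. Apart from this book-keeping there is no conceptual obstacle; the proof reduces to a catalogue of thirteen short kernel computations.
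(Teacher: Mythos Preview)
Your approach is correct and is exactly the paper's: both compute $[x,y]+[y,x]$ for a generic $x$ and solve the resulting linear system, the paper carrying this out explicitly only for $L_7$ and declaring the remaining cases ``similar''; your reduction to basis vectors $y=e_k$ via the symmetric parts $s_{jk}=[e_j,e_k]+[e_k,e_j]$ is just a harmless streamlining of the same calculation. One small correction to your book-keeping remark: the rank of the coefficient matrix is \emph{not} constant across the family $L_6(\alpha)$ --- it drops precisely at $\alpha=\tfrac14$, where the system $2x_2+x_3=0$, $x_2+2\alpha x_3=0$ becomes dependent and one gets $\ZL=\langle e_1,\,e_2-2e_3\rangle$, whereas for $\alpha\neq\tfrac14$ your method yields only $\ZL=\langle e_1\rangle$; this case split is exactly what resurfaces in the paper's subsequent computation of $\DerL(L_6(\alpha))$.
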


\begin{proof}
For illustrative purposes, we show how we obtained the Lie-center of the Leibniz algebra $L_7$. The computations for all other cases are similar. 
	
	Let $x=x_1e_1+x_2e_2+x_3e_3 \in \ZL(L_7)$. Then, for any $y=y_1e_1+y_2e_2+y_3e_3\in L_7$, we have
	\begin{align*}
		0 & =[x,y]+[y,x]                                                     \\
		  & =x_1y_3[e_1,e_3]+x_2y_2[e_2,e_2]+x_3y_2[e_3,e_2]+x_2y_3[e_2,e_3] \\
		  & +y_1x_3[e_1,e_3]+y_2x_2[e_2,e_2]+y_3x_2[e_3,e_2]+y_2x_3[e_2,e_3] \\
		  & =(x_2 y_3 + y_2 x_3)e_1.                 %2 rimosso                   
	\end{align*}
	In particular, we obtain $x_2=0$ whenever $y_2=0$, and $x_3=0$ whenever $y_3=0$. Thus, $\ZL(L_7)=\langle e_1 \rangle$.
\end{proof} 

\begin{proposition}\label{pro_der}
	For each Leibniz algebra $L_i$ listed in \Cref{table:list_of_3_dim_Leib}, the corresponding ideal of Lie-derivations $\DerL(L_i)$ is listed below:
	\begin{itemize}[itemsep=0.5em]
		\item $\DerL(L_1)=\DerL(L_8)=\DerL(L_9(\alpha))=\DerL(L_{10}(\alpha))=\DerL(L_{11})=0$.
		\item  $\DerL(L_2(\alpha))=\left\{\begin{pmatrix}
		      0 & 0 & 0 \\
		      0 & a & b \\
		      0 & 0 & 0 
		\end{pmatrix} \; \middle| \; a,b\in\bF\right\}$.
		\item  $\DerL(L_3)=\left\{\begin{pmatrix}
		      0 & 0 & b \\
		      0 & a & c \\
		      0 & 0 & 0
		\end{pmatrix} \; \middle| \; a,b,c\in\bF\right\}$.
		\item  $\DerL(L_4)=\DerL(L_5(\alpha))=\DerL(L_7)=\left\{\begin{pmatrix}
		      0 & a & b \\
		      0 & 0 & 0 \\
		      0 & 0 & 0 
		\end{pmatrix} \; \middle| \; a,b \in\bF\right\}$.
		\item  $\DerL(L_6(\alpha))=\left\{\begin{pmatrix}
		      0 & a & b \\
		      0 & 0 & 0 \\
		      0 & 0 & 0 
		\end{pmatrix} \; \middle| \; a,b \in\bF\right\}$, for any $\alpha \in \bF \setminus \lbrace \frac{1}{4}\rbrace$.
		\item  $\DerL(L_6\left(\tfrac{1}{4}\right))=\left\{\begin{pmatrix}
		      0 & b & c \\
		      0 & a & \frac{a}{2} \\
		      0 & -2a & -a 
		\end{pmatrix} \; \middle| \; a,b,c\in\bF\right\}$.
		\item  $\DerL(L_{12})=\DerL(L_{13})=\left\{\begin{pmatrix}
		      0 & 0 & 0 \\
		      0 & 0 & a \\
		      0 & 0 & 0 
		\end{pmatrix} \; \middle| \; a\in\bF\right\}$.
	\end{itemize}
\end{proposition}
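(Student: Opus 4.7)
The plan is to exploit the characterisation proved in the theorem just above, namely
\[
\DerL(L) \;=\; \{\, d \in \Der(L) \;|\; (d,d) \in \Bider(L) \,\},
\]
together with the explicit description of $\Bider(L_i)$ supplied by \Cref{thm_bider}. In other words, for each algebra $L_i$ in \Cref{table:list_of_3_dim_Leib}, I would read off from \Cref{thm_bider} the pair of matrices $(M,N)$ parametrising a generic biderivation, impose the linear system $M=N$ on the free parameters $a,b,c,d,e,\dots$, and then re-parametrise the resulting affine subspace to exhibit $\DerL(L_i)$ as a matrix subalgebra of $\mathfrak{gl}(3,\bF)$.

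Concretely, I would proceed case-by-case through the thirteen families. For $L_1$ the condition $M=N$ forces $2a=0$, $b=-b$ and $c=0$, hence (since $\bchar(\bF)\neq 2$) $a=b=c=0$, giving $\DerL(L_1)=0$; the same collapse occurs for $L_8,L_9(\alpha),L_{10}(\alpha),L_{11}$, where the equation $M=N$ kills every parameter because the diagonal of $M$ is non-trivial while the diagonal of $N$ vanishes (and the off-diagonal constraint $\alpha b=0$ uses $\alpha\neq 0$ for $L_9(\alpha)$ and $L_{10}(\alpha)$). In the remaining families the system has a non-trivial solution space: for $L_2(\alpha)$, $L_3$, $L_4$, $L_5(\alpha)$, $L_7$, $L_{12}$, $L_{13}$ one just equates the two $3\times 3$ blocks and keeps the surviving parameters; here the characteristic hypothesis enters again (for $L_5(\alpha)$, the equation $-\alpha c=\alpha c$ combined with $\alpha\neq 0$ and $\bchar(\bF)\neq 2$ forces $c=0$).

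The only genuinely delicate family is $L_6(\alpha)$, which must be split according to the value of $\gamma=\frac{4\alpha-1}{2\alpha}$. The diagonal entry $\gamma a$ of $M$ equals the $(1,1)$-entry $0$ of $N$ only if $\gamma a=0$. For $\alpha\neq \tfrac14$ one has $\gamma\neq 0$, so $a=0$ and only the parameters $b,c$ in the first row survive. For $\alpha=\tfrac14$ the coefficient $\gamma$ vanishes, so $a$ stays free; the remaining entries of $M$ and $N$ already coincide (they depend only on $a$), so the resulting matrices are precisely those displayed with parameters $a,b,c$. This bifurcation is the step that I expect to require the most care, since one must verify that outside the value $\alpha=\tfrac14$ no further parameter can be recovered, and that at $\alpha=\tfrac14$ the matrix indeed belongs to $\Der(L_6(\tfrac14))$ (this is automatic from \Cref{thm_bider}) and lies in $\DerL$ because its image is contained in $\ZL(L_6(\tfrac14))=\langle e_1,e_2-2e_3\rangle$ computed in \Cref{pro_der}.

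Finally, as a consistency check for each case, I would verify that $\Imm(d)\subseteq \ZL(L_i)$ directly from the matrices produced, using the Lie-centers listed in \Cref{pro_der}. This double-checks the characterisation $\DerL(L)=\{d\in\Der(L)\mid \Imm(d)\subseteq \ZL(L)\}$ against the biderivation-based one, and gives the complete list claimed in the statement.
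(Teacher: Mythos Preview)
Your proposal is correct and follows essentially the same route as the paper: both use the characterisation $\DerL(L)=\{d\in\Der(L)\mid (d,d)\in\Bider(L)\}=\Der(L)\cap\ADer(L)$ together with the explicit description of $\Bider(L_i)$ from \Cref{thm_bider}, and then solve the linear system obtained by equating the two matrix components. The paper's proof is terser (it only writes out the case $L_7$ and declares the rest similar), whereas you supply more detail, including the correct bifurcation for $L_6(\alpha)$ at $\alpha=\tfrac14$; your parenthetical remark that $\alpha\neq 0$ is needed for $L_9(\alpha)$ and $L_{10}(\alpha)$ is harmless but actually superfluous, since $b=0$ already follows from the $(2,1)$-entry.
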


\begin{proof}
For illustrative purposes, we show how we computed the Lie-derivations of the Leibniz algebra $L_7$. The computations for all other cases are similar. 
	
By \Cref{thm_bider}, we have
	\[
		\DerL(L_7)=\Der(L_7) \cap \ADer(L_7)=\left\{\begin{pmatrix}
		0 & a & b \\
		0 & 0 & 0 \\
		0 & 0 & 0 
		\end{pmatrix} \; \middle| \; a,b \in\bF\right\}=\langle e_{12},e_{13} \rangle.
	\]
	In other words, $d \in \DerL(L_7)$ if and only if $d(e_1)=0$, $d(e_2)=a e_1$ and $d(e_3)=b e_1$, for some $a,b \in \bF$.
\end{proof}

We are ready to show all Lie-holomorphs of $3$-dimensional non-Lie Leibniz algebras.

\begin{theorem}\label{theorem:3_dim_Lie_holomorphs} For each Leibniz algebra $L_i$ listed in \Cref{table:list_of_3_dim_Leib}, 
	the corresponding Lie-holomorph algebra $\holL(L_i)$ is described below. Throughout, we denote the elements $(e_j,0)$ by $\widetilde{e}_j$, for any $j=1,2,3$, and the elements $(0,e_{hk})$ by $\ov{e_{hk}}$, for any $h,k=1,2,3$.
	\begin{itemize}[itemsep=0.35em]
		\item  $\holL(L_i) \cong L_i$, for $i=1,8,11$.
		\item  $\holL(L_2(\alpha))=\langle \widetilde{e}_1,\widetilde{e}_2,\widetilde{e}_3,\ov{e_{22}},\ov{e_{23}}\rangle$, where the non-zero brackets are
		      \begin{gather*}
		      	\left[\widetilde{e}_1,\widetilde{e}_3\right]=\alpha\widetilde{e}_1,\quad
		      	\left[\widetilde{e}_3,\widetilde{e}_2\right]=-\left[\widetilde{e}_2,\widetilde{e}_3\right]=\widetilde{e}_2,\quad
		      	\left[\ov{e_{22}},\ov{e_{23}}\right]=-\left[\ov{e_{23}},\ov{e_{22}}\right]=\ov{e_{23}} \\
		      	\left[\ov{e_{22}},\widetilde{e}_2\right]=-\left[\widetilde{e}_2,\ov{e_{22}}\right]=
		      	\left[\ov{e_{23}},\widetilde{e}_3\right]=-\left[\widetilde{e}_3,\ov{e_{23}}\right]=\widetilde{e}_2.
		      \end{gather*}
		      
		\item  $\holL(L_3)=\langle \widetilde{e}_1,\widetilde{e}_2,\widetilde{e}_3,\ov{e_{13}},\ov{e_{22}},\ov{e_{23}}\rangle$, where the non-zero brackets are
		      \begin{gather*}
		      \left[\widetilde{e}_3,\widetilde{e}_2\right] = -\left[\widetilde{e}_2,\widetilde{e}_3\right] = \widetilde{e}_2, \quad
		      	\left[\widetilde{e}_3,\widetilde{e}_3\right] =\widetilde{e}_1, \quad
		      	%meno rimosso
                \left[\ov{e_{22}},\ov{e_{23}}\right] = -\left[\ov{e_{23}},\ov{e_{22}}\right] = \ov{e_{23}},\\
		      	\left[\ov{e_{13}},\widetilde{e}_3\right] = -\left[\widetilde{e}_3,\ov{e_{13}}\right] = \widetilde{e}_1, \;
		      	\left[\ov{e_{22}},\widetilde{e}_2\right] = -\left[\widetilde{e}_2,\ov{e_{22}}\right] =\left[\ov{e_{23}},\widetilde{e}_3\right] = -\left[\widetilde{e}_3,\ov{e_{23}}\right] = \widetilde{e}_2.
		      \end{gather*}
		    
		\item  $\holL(L_4)=\langle \widetilde{e}_1,\widetilde{e}_2,\widetilde{e}_3,\ov{e_{12}},\ov{e_{13}} \rangle$, where the non-zero brackets are
		      \begin{gather*}
		      	\left[\widetilde{e}_2,\widetilde{e}_2\right] =\left[\widetilde{e}_3,\widetilde{e}_3\right]=\widetilde{e}_1,\\
		      	\left[\ov{e_{12}},\widetilde{e}_2\right] = -\left[\widetilde{e}_2,\ov{e_{12}}\right] = 
		      	\left[\ov{e_{13}},\widetilde{e}_3\right] = -\left[\widetilde{e}_3,\ov{e_{13}}\right] = \widetilde{e}_1.
		      \end{gather*}
		      
		\item  $\holL(L_5(\alpha))=\langle \widetilde{e}_1,\widetilde{e}_2,\widetilde{e}_3,\ov{e_{12}},\ov{e_{13}} \rangle$, where the non-zero brackets are
		      \begin{gather*}
		      	\left[\widetilde{e}_2,\widetilde{e}_2\right]=\widetilde{e}_1, \quad \left[\widetilde{e}_3,\widetilde{e}_3\right] = \alpha \widetilde{e}_1,\\
		      	\left[\ov{e_{12}},\widetilde{e}_2\right] = -\left[\widetilde{e}_2,\ov{e_{12}}\right] = 
		      	\left[\ov{e_{13}},\widetilde{e}_3\right] = -\left[\widetilde{e}_3,\ov{e_{13}}\right] = \widetilde{e}_1.
		      \end{gather*}
		      
		\item  $\holL(L_6(\alpha))=\langle \widetilde{e}_1,\widetilde{e}_2,\widetilde{e}_3,\ov{e_{12}},\ov{e_{13}} \rangle$, if $\alpha \in \bF \setminus \lbrace \frac{1}{4} \rbrace$, where the non-zero brackets are
		      \begin{gather*}
		      	\left[\widetilde{e}_2,\widetilde{e}_2\right]=\left[\widetilde{e}_2,\widetilde{e}_3\right]=\widetilde{e}_1, \quad \left[\widetilde{e}_3,\widetilde{e}_3\right] = \alpha \widetilde{e}_1,\\
		      	\left[\ov{e_{12}},\widetilde{e}_2\right] = -\left[\widetilde{e}_2,\ov{e_{12}}\right] = 
		      	\left[\ov{e_{13}},\widetilde{e}_3\right] = -\left[\widetilde{e}_3,\ov{e_{13}}\right] = \widetilde{e}_1.
		      \end{gather*}
		      
		\item  $\holL(L_6\left(\tfrac{1}{4}\right))=\langle \widetilde{e}_1,\widetilde{e}_2,\widetilde{e}_3,\ov{e},\ov{e_{12}},\ov{e_{13}} \rangle$, where $\ov{e}=\ov{e_{22}}+\frac{1}{2}\ov{e_{23}}-2\ov{e_{32}}-\ov{e_{33}}$ and the non-zero brackets are
		      \begin{gather*}
\left[\widetilde{e}_2,\widetilde{e}_2\right]=\left[\widetilde{e}_2,\widetilde{e}_3\right]=\widetilde{e}_1, \quad \left[\widetilde{e}_3,\widetilde{e}_3\right] = \tfrac{1}{4} \widetilde{e}_1,\\ \left[\ov{e_{12}},\ov{e}\right]=-\left[\ov{e},\ov{e_{12}}\right]=\ov{e_{12}}+\frac{1}{2}\ov{e_{13}}, \quad \left[\ov{e},\ov{e_{13}}\right]=-\left[\ov{e_{13}},\ov{e}\right]=2\ov{e_{12}}+\ov{e_{13}},\\
		      	\left[\ov{e},\widetilde{e}_2\right]=-\left[\widetilde{e}_2,\ov{e}\right]=2\left[\ov{e},\widetilde{e}_3\right]=-\left[\widetilde{e}_3,\ov{e}\right]=\widetilde{e}_2-2\widetilde{e}_3,\\
		      	\left[\ov{e_{12}},\widetilde{e}_2\right] = -\left[\widetilde{e}_2,\ov{e_{12}}\right] = 
		      	\left[\ov{e_{13}},\widetilde{e}_3\right] = -\left[\widetilde{e}_3,\ov{e_{13}}\right] = \widetilde{e}_1.
		      \end{gather*}
		      
		\item  $\holL(L_7)=\langle \widetilde{e}_1,\widetilde{e}_2,\widetilde{e}_3,\ov{e_{12}},\ov{e_{13}} \rangle$, where the non-zero brackets are
		      \begin{gather*}
		      	\left[\widetilde{e}_2,\widetilde{e}_3\right]=
		      	\left[\ov{e_{12}},\widetilde{e}_2\right] = -\left[\widetilde{e}_2,\ov{e_{12}}\right] = 
		      	\left[\ov{e_{13}},\widetilde{e}_3\right] = -\left[\widetilde{e}_3,\ov{e_{13}}\right] = \widetilde{e}_1.
		      \end{gather*}
		      
		\item  $\holL(L_i(\alpha)) \cong L_i(\alpha)$, for $i=9,10$.
		      
		\item  $\holL(L_{12})=\langle \widetilde{e}_1,\widetilde{e}_2,\widetilde{e}_3,\ov{e_{23}} \rangle$, where the non-zero brackets are
		      \begin{gather*}
		      	\left[\widetilde{e}_1,\widetilde{e}_3\right]=\widetilde{e}_2, \quad \left[\widetilde{e}_3,\widetilde{e}_3\right]=\widetilde{e}_1, \quad \left[\ov{e_{23}},\widetilde{e}_3\right] = -\left[\widetilde{e}_3,\ov{e_{23}}\right] = \widetilde{e}_2.
		      \end{gather*}
		      
		\item  $\holL(L_{13})=\langle \widetilde{e}_1,\widetilde{e}_2,\widetilde{e}_3,\ov{e_{23}} \rangle$, where the non-zero brackets are
		      \begin{gather*}
		      	\left[\widetilde{e}_1,\widetilde{e}_3\right]=\widetilde{e}_1+\widetilde{e}_2, \quad \left[\widetilde{e}_3,\widetilde{e}_3\right]=\widetilde{e}_1, \quad \left[\ov{e_{23}},\widetilde{e}_3\right] = -\left[\widetilde{e}_3,\ov{e_{23}}\right] = \widetilde{e}_2.
		      \end{gather*}
	\end{itemize}
\end{theorem}

\begin{proof}
	For illustrative purposes, we show how we obtained the Lie-holomorph of the Leibniz algebra $L_7$. The computations for all other cases are similar. 
	
	By \Cref{pro_der}, we have that
	$\holL(L_7)=L_7 \rtimes \DerL(L_7)=L_7 \rtimes \langle e_{12}, e_{13} \rangle$ is the $5$-dimensional Leibniz algebra with basis $\lbrace \widetilde{e}_1,\widetilde{e}_2,\widetilde{e}_3,\ov{e_{12}}, \ov{e_{13}} \rbrace$ and non-trivial commutators
	\begin{align*}
		  & \left[\widetilde{e}_2,\widetilde{e}_3\right]=\left[(e_2,0),(e_3,0)\right]=(e_1,0)=\widetilde{e}_1,                         \\
		  & \left[\ov{e_{12}},\widetilde{e}_2\right] = \left[(0,e_{12}),(e_2,0)\right]=(e_{12} \cdot e_2,0)=(e_1,0)=\widetilde{e}_1,   \\
		  & \left[\widetilde{e}_2,\ov{e_{12}}\right] = \left[(e_2,0),(0,e_{12})\right]=(-e_{12}\cdot e_2,0)=(-e_1,0)=-\widetilde{e}_1, \\
		  & \left[\ov{e_{13}},\widetilde{e}_3\right] = \left[(0,e_{13}),(e_3,0)\right]=(e_{13} \cdot e_3,0)=(e_1,0)=\widetilde{e}_1,   \\
		  & \left[\widetilde{e}_3,\ov{e_{13}}\right] = \left[(e_3,0),(0,e_{13})\right]=(-e_{13}\cdot e_3,0)=(-e_1,0)=-\widetilde{e}_1. 
	\end{align*}
\end{proof}

We observe that the dimension of the Lie-holomorph algebra of a $3$-dimensional Leibniz algebra lies between $3$ and $6$. Furthermore:
\begin{itemize}[itemsep=0.35em]
	\item The $4$-dimensional Leibniz algebras $\holL(L_{12})$ and $\holL(L_{13})$ are not isomorphic. Indeed, the former is $3$-nilpotent, since
	      \[
	      	\holL(L_{12})^{(1)}=\langle \widetilde{e}_1,\widetilde{e}_2 \rangle, \quad \holL(L_{12})^{(2)}=\langle \widetilde{e}_2 \rangle, \quad \holL(L_{12})^{(3)}=0,
	      \]
	      whereas the latter is not nilpotent, because
	      \[
	      	\holL(L_{13})^{(1)}=\langle \widetilde{e}_1,\widetilde{e}_2 \rangle, \quad \holL(L_{13})^{(k)}=\langle \widetilde{e}_1+\widetilde{e}_2 \rangle, \quad \text{for any } k \geq 2.
	      \]
	      We observe that $\holL(L_{12})$ is isomorphic to the Leibniz algebra $\mathfrak{R}_5$ of~\cite[Theorem 3.2]{4-dim-nil}, which has basis $\lbrace x_1,x_2,x_3,x_4 \rbrace$ and non-trivial commutators
	      \[
	      	[x_1,x_1]=x_3, \quad [x_1,x_2]=[x_3,x_1]=x_4.
	      \]
	      An explicit isomorphism is given by the linear map $\varphi \colon \mathfrak{R}_5 \to \holL(L_{12})$ defined by
	      \[
	      	\varphi(x_1)=\widetilde{e}_3, \quad \varphi(x_2)=\widetilde{e}_1-\ov{e_{23}}, \quad \varphi(x_3)=\widetilde{e}_1, \quad \varphi(x_4)=\widetilde{e}_2.
	      \]
Furthermore, since passing to the opposite algebra reverses the Leibniz identity (turning right Leibniz algebras into left Leibniz algebras), it is natural to compare $\holL(L_{12})^{\op}$ with the classification of $4$-dimensional nilpotent left Leibniz algebras given in~\cite{4-dim-nil}. One may verify that $\holL(L_{12})^{\op}$ is indeed isomorphic to the left Leibniz algebra $\mathcal{A}_{24}$ of~\cite[Theorem 2.6]{4-dim-nil}, since $\mathcal{A}_{24}^{\op} \cong \mathfrak{R}_5$.
	      
Finally, the Lie-holomorph $\holL(L_{13})$ is isomorphic to the Leibniz algebra $\mathfrak{L}_{39}$ of~\cite[Proposition 3.11]{4-dim-solv}, which has basis $\lbrace f_1,f_2,f_3,f_4 \rbrace$ and non-zero brackets
	      \[
	      	[f_1,f_4]=-[f_4,f_1]=f_2, \quad [f_3,f_4]=f_3, \quad [f_4,f_4]=f_2.
	      \]
	      Indeed, one may check that the linear map $\psi \colon \mathfrak{L}_{39} \to \holL(L_{13})$ defined by
	      \[
	      	\psi(f_1)=\widetilde{e}_2-\ov{e_{23}}, \quad \psi(f_2)=-\widetilde{e}_2, \quad \psi(f_3)=\widetilde{e}_1+\widetilde{e}_2, \quad \psi(f_4)=-\widetilde{e}_1+\widetilde{e}_3,
	      \]
	      is a Leibniz algebra isomorphism.
	\item The algebra $\holL(L_2(\alpha))$ is not isomorphic to any of the other $5$-dimensional Lie-holomorphs listed in \Cref{theorem:3_dim_Lie_holomorphs}, since it is the only one among them with a $3$-dimensional derived subalgebra.
	      
	\item For any $\alpha\in\bF \setminus \lbrace \tfrac{1}{4} \rbrace$, the Leibniz algebra $\holL(L_6(\alpha))$ is not isomorphic to any of the other $5$-dimensional Lie-holomorphs listed in \Cref{theorem:3_dim_Lie_holomorphs}, since it is the only one among them with a $4$-dimensional Lie-center.
	      
	\item The Lie-holomorphs $\holL(L_5(-1))$ and $\holL(L_7)$ are isomorphic. Indeed, the change of basis
	      \begin{gather*}
	      	\widetilde{e}_1 \mapsto \widetilde{e}_1, \quad \widetilde{e}_2 \mapsto \widetilde{e}_2+\widetilde{e}_3+\ov{e_{12}}, \quad \widetilde{e}_3 \mapsto \widetilde{e}_2-\widetilde{e}_3, \\
	      	\ov{e_{12}} \mapsto \frac{1}{2}(\ov{e_{12}}+\ov{e_{13}}), \quad \ov{e_{13}} \mapsto \frac{1}{2}(\ov{e_{12}}-\ov{e_{13}})  
	      \end{gather*}
	      defines a Leibniz algebra isomorphism $\holL(L_5(-1)) \to \holL(L_7)$.
	      
	\item The algebras $\holL(L_7)$ and $\holL(L_4)$ are not isomorphic over a general field $\bF$. Indeed, suppose there is an isomorphism $\phi \colon \holL(L_4) \to \holL(L_7)$. Then
	      \[
	      	\phi(\widetilde{e}_1)=k\widetilde{e}_1,
	      \]
	      for some $k \neq 0$, since $\left[\holL(L_7),\holL(L_7) \right]=\left[\holL(L_4),\holL(L_4) \right]=\langle \widetilde{e}_1 \rangle$, and
	      %\[
	      %\varphi(\widetilde{e}_1)=k\widetilde{e}_1, \quad \varphi(\ov{e_{12}})=a\widetilde{e}_1+b\ov{e_{12}}+c\ov{e_{13}}, \quad \varphi(\ov{e_{13}})=a'\widetilde{e}_1+b'\ov{e_{12}}+c'\ov{e_{13}},
	      %\]
	      %with $k \neq 0$ and $bc'-b'c \neq 0$, since $\left[\holL(L_7),\holL(L_7) \right]=\left[\holL(L_4),\holL(L_4) \right]=\langle \widetilde{e}_1 \rangle$, $\ZL(\holL(L_7))=\ZL(\holL(L_4))=\langle \widetilde{e}_1,\ov{e_{12}},\ov{e_{13}} \rangle$, and
	      \[
	      	\phi(\widetilde{e}_2)=\alpha_1 \widetilde{e}_1+\alpha_2\widetilde{e}_2+\alpha_3\widetilde{e}_3+\alpha_4\ov{e_{12}}+\alpha_5 \ov{e_{13}},
	      \]
	      \[
	      	\phi(\widetilde{e}_3)=\beta_1 \widetilde{e}_1+\beta_2\widetilde{e}_2+\beta_3\widetilde{e}_3+\beta_4\ov{e_{12}}+\beta_5 \ov{e_{13}},
	      \]
	      for some $\alpha_i,\beta_j \in \bF$. Thus, one has
	      \begin{align*}
	      	k \widetilde{e}_1=\phi([\widetilde{e}_2,\widetilde{e}_2])= & [\phi(\widetilde{e}_2),\phi(\widetilde{e}_2)]=\alpha_2 \alpha_3 \widetilde{e}_1,                                                                            \\
	      	k \widetilde{e}_1=\phi([\widetilde{e}_3,\widetilde{e}_3])= & [\phi(\widetilde{e}_3),\phi(\widetilde{e}_3)]=\beta_2 \beta_3 \widetilde{e}_1,                                                                              \\
	      	0=\phi([\widetilde{e}_2,\widetilde{e}_3])=                 & [\phi(\widetilde{e}_2),\phi(\widetilde{e}_3)]=(\alpha_2 \beta_3 - \alpha_2\beta_4 - \alpha_3 \beta_5 +\alpha_4 \beta_2 + \alpha_5 \beta_3) \widetilde{e}_1, \\
	      	0=\phi([\widetilde{e}_3,\widetilde{e}_2])=                 & [\phi(\widetilde{e}_3),\phi(\widetilde{e}_2)]=(\beta_2 \alpha_3 - \beta_2\alpha_4 - \beta_3 \alpha_5 +\beta_4 \alpha_2 + \beta_5 \alpha_3) \widetilde{e}_1. 
	      \end{align*}
	      It follows that $k=\alpha_2\alpha_3=\beta_2\beta_3$ and $\alpha_2 \beta_3+\alpha_3 \beta_2=0$. Hence, $\alpha_i \neq 0 \neq \beta_i$ for any~$i=2,3$, and
	      \[
	      	k\left(\dfrac{\alpha_2}{\beta_2}+\dfrac{\beta_2}{\alpha_2}\right)=0.
	      \]
	      Hence, we have $\alpha_2^2+\beta_2^2=0$, which has a non-trivial solution if and only if $-1$ is a square in $\bF$. In this case, we recall from~\cite{3-dimF} that $L_5(-1)$ is isomorphic to $L_4$, which implies
	      \[
	      	\holL(L_4) \cong \holL(L_5(-1)) \cong \holL(L_7).
	      \]
	      Therefore, we conclude that $\holL(L_7)$ is isomorphic to $\holL(L_4)$ if and only if $x^2+1=0$ has a solution in $\bF$. 
	      
	\item The $6$-dimensional Leibniz algebras $\holL(L_3)$ and $\holL(L_6\left(\tfrac{1}{4}\right))$ are not isomorphic, since
    \begin{equation*}
\dim_{\bF}\Leib\left(\holL(L_3)\right)=1\neq 2= \dim_{\bF}\Leib\left(\holL\left(L_6\left(\tfrac{1}{4}\right)\right)\right).
       %\dim_\bF \left[\holL(L_3),\holL(L_3)\right]=3 \neq 2 =\dim_\bF \left[\holL\left(L_6\left(\tfrac{1}{4}\right)\right),\holL\left(L_6\left(\tfrac{1}{4}\right)\right)\right].
       %le dim degli olomormi derivati erano sbagliate, sono entrambe 3
    \end{equation*}
    Indeed, it is easy to see that $\Leib(\holL(L_3))=\langle \widetilde{e}_1 \rangle$, while
    \[
    \widetilde{e}_1, \widetilde{e}_2-2 \widetilde{e}_3 \in \Leib\left(\holL\left(L_6\left(\tfrac{1}{4}\right)\right)\right)
    \]
    since
    \[
[\widetilde{e}_2,\widetilde{e}_2]=\widetilde{e}_1 \quad \text{ and } \quad [\widetilde{e}_3+\ov{e},\widetilde{e}_3+\ov{e}]=\frac{1}{4}\widetilde{e}_1-\frac{1}{2}(\widetilde{e}_2-2\widetilde{e}_3). \]
\end{itemize}

We summarise the results of \Cref{sec:classification} in the following theorem.

\begin{theorem}
Let $L$ be a non-Lie Leibniz algebra with $\dim_\bF L \leq 3$. Then, the Lie-holomorph $\holL(L)$ is isomorphic to one of the Leibniz algebras listed in~\Cref{tab:holomorphs}, which are pairwise non-isomorphic.
\noproof
\end{theorem}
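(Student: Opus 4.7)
The theorem is a summary of the case analysis carried out in \Cref{sec:classification}, so my plan is essentially to assemble the pieces already established and then perform a careful bookkeeping of isomorphism classes.

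First I would invoke the existing classifications: in dimension $2$ the only non-Lie Leibniz algebras are $L_A$ and $L_B$ of \Cref{2-dim}, while in dimension $3$ the classifications of~\cite{3-dimApuyov, 3-dimF} yield the thirteen families collected in \Cref{table:list_of_3_dim_Leib}. For each algebra $L$ in this list, I would compute $\DerL(L)$ by means of the identity
\[
\DerL(L)=\Der(L)\cap \ADer(L)=\{d\in\Der(L)\mid (d,d)\in\Bider(L)\}
\]
obtained as a corollary of the main theorem of \Cref{sec:holomorph}, using the explicit description of $\Bider(L)$ given in \Cref{thm_bider}. From these data the Lie-holomorph $\holL(L)=L\rtimes\DerL(L)$ is read off directly via its defining bracket, giving the algebras displayed in \Cref{theorem:3_dim_Lie_holomorphs}.

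The second and more delicate step is to collapse this list of explicit Leibniz algebras into a set of pairwise non-isomorphic representatives. Here I would proceed by repeatedly applying standard invariants: the dimension of $\holL(L)$, the dimension of its derived subalgebra, the dimension of its Lie-center, and whether it is nilpotent or solvable. These invariants already separate $\holL(L_{12})$ from $\holL(L_{13})$ (nilpotent vs.\ non-nilpotent, with different asymptotic dimensions of the lower central series), distinguish $\holL(L_2(\alpha))$ from the other $5$-dimensional holomorphs by the dimension of $[\holL,\holL]$, and distinguish $\holL(L_6(\alpha))$ for $\alpha\neq \tfrac14$ from the rest by the dimension of the Lie-center. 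For redundancies that \emph{do} occur, such as $\holL(L_A)\cong L_6(\tfrac14)$ and $\holL(L_5(-1))\cong\holL(L_7)$, I would record the explicit change-of-basis maps already exhibited in the preceding remarks, and for $L_B$, $L_1$, $L_8$, $L_{11}$, $L_9(\alpha)$, $L_{10}(\alpha)$ I would note that $\DerL(L)=0$ forces $\holL(L)\cong L$.

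The one point that requires genuine care rather than bookkeeping is the comparison between $\holL(L_7)$ and $\holL(L_4)$, which depends on the arithmetic of the base field~$\bF$. I would reproduce the argument already sketched: a putative isomorphism $\phi\colon\holL(L_4)\to\holL(L_7)$ must send the one-dimensional derived subalgebra $\langle\widetilde{e_1}\rangle$ to itself and, writing out the relations $[\widetilde{e_2},\widetilde{e_2}]=[\widetilde{e_3},\widetilde{e_3}]=\widetilde{e_1}$ and $[\widetilde{e_2},\widetilde{e_3}]=[\widetilde{e_3},\widetilde{e_2}]=0$, one obtains an equation of the form $\alpha_2^2+\beta_2^2=0$ whose solvability is equivalent to $-1$ being a square in~$\bF$. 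Combined with the isomorphism $L_5(-1)\cong L_4$ over such fields recalled from~\cite{3-dimF}, this shows that $\holL(L_7)\cong\holL(L_4)$ precisely when $x^2+1$ has a root in $\bF$, which is the only place where the final table must be read with a field-dependent caveat.

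The main obstacle, then, is not any single hard step but the combinatorial overhead of handling the thirteen families simultaneously and ensuring that no two entries of the resulting table are inadvertently isomorphic; the field-dependent identification of $\holL(L_4)$ and $\holL(L_7)$ is the only subtle point and has already been resolved above.
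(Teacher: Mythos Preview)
Your proposal is correct and follows essentially the same approach as the paper: the theorem is stated with \verb|\noproof| and is merely a summary of the case-by-case analysis carried out throughout \Cref{sec:classification}, so assembling the classifications, computing each $\DerL(L)$ from \Cref{thm_bider}, reading off the Lie-holomorphs as in \Cref{theorem:3_dim_Lie_holomorphs}, and then separating or identifying them via the invariants and explicit isomorphisms in the subsequent remarks is exactly what the paper does. The only minor addition is that you should also record the comparison of $\holL(L_3)$ and $\holL(L_6(\tfrac14))$ via the dimension of the derived subalgebra, which the paper handles in the final bullet point before the theorem.
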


\renewcommand{\arraystretch}{1.4}

\begin{table}[H]
	\centering
	\caption{Classification of Lie-holomorphs of low-dimensional Leibniz algebras}
	\label{tab:holomorphs}
	\begin{tabular}{|c|c|}
		\hline
		$\dim_\bF \holL(L)$ & \textbf{Lie-holomorph}                                                                                   \\ \hline
		2                            & $L_B$                                                                                                    \\ \hline
		3                            & $L_1$, $L_6\left(\tfrac{1}{4}\right)$, $L_8$, $L_9(\alpha)$, $L_{10}(\alpha)$, $L_{11}$                  \\ \hline
		4                            & $\mathfrak{R}_5$, $\mathfrak{L}_{39}$                                                                    \\ \hline
		5                            & $\holL(L_2(\alpha))$, $\holL(L_4)$, $\holL(L_5(\alpha))$, $\holL(L_6(\beta))$, $\beta \neq \tfrac{1}{4}$ \\ \hline
		6                            & $\holL(L_3)$, $\holL\left(L_6\left(\frac{1}{4}\right)\right)$                                            \\ \hline
	\end{tabular}
\end{table}

%\section*{Acknowledgements}
%Not applicable. There is no Competing Interest.

\section*{Acknowledgment}

The authors are supported by the University of Palermo, by the ``National Group for Algebraic and Geometric Structures and their Applications'' (GNSAGA -- INdAM), and by the SDF Sustainability Decision Framework Research Project -- MISE decree of 31/12/2021 (MIMIT Dipartimento per le politiche per le imprese -- Direzione generale per gli incentivi alle imprese) -- CUP:~B79J23000530005, COR:~14019279, Lead Partner:~TD Group Italia Srl, Partner:~University of Palermo. The second author is also a Postdoctoral Researcher of the Fonds de la Recherche Scientifique--FNRS.

%\section*{Declarations}
%
%\subsection*{Corresponding author}
%The second author is the corresponding author.
%
%\subsection*{Availability of data and materials}
%Not applicable.
%
%\subsection*{Competing interests}
%Not applicable. There is no competing interest.
%
%\subsection*{Authors' contributions}
%All authors contributed equally to this article.

\printbibliography

@preamble{"\providecommand{\noopsort}[1]{}"}

@article{GL,
	author = {{\noopsort{Grandje{\'a}n}}{R.-Grandje{\'a}n}, A. and Ladra, M.},
	journal = {Proc. Edinburgh Math. Soc.},
	pages = {169--177},
	title = {{$H_{2}(T,G,\partial)$} and central extensions for crossed modules},
	volume = {42},
	year = {1999}}

@article{BJK2,
	author = {F. Borceux and G. Janelidze and G. M. Kelly},
	journal = {Theory and Applications of Categories},
	number = {11},
	pages = {244--286},
	title = {On the representability of actions in a semi-abelian category},
	volume = {14},
	year = {2005}}

@article{orzech,
	author = {Orzech, G.},
	journal = {Journal of Pure and Applied Algebra},
	pages = {287--314 and 315--340},
	title = {Obstruction theory in algebraic categories {I} and {II}},
	volume = {2},
	number={4},
	year = {1972},
doi={10.1016/0022-4049(72)90009-6}
}

@article{LaRosaMancini1,
	title = {Two-step nilpotent {L}eibniz algebras},
	author = {La Rosa, G. and Mancini, M.},
	journal = {Linear Algebra and its Applications},
	volume = {637},
	pages = {119-137},
	year = {2022},
	doi = {10.1016/j.laa.2021.12.013},
}

@inproceedings{ManciniBider,
	author = {Mancini, M.},
	booktitle = {Non-Associative Algebras and Related Topics. NAART 2020},
	editor = {H. Albuquerque and J. Brox and C. Martínez and P. Saraiva},
	pages = {127--136},
	publisher = {Springer, Cham},
	series = {Springer Proceedings in Mathematics \& Statistics},
	title = {Biderivations of low-dimensional {L}eibniz algebras},
	number = {427.8},
	year = {2023},
	doi = {10.1007/978-3-031-32707-0_8},}

@article{LaRosaMancini2,
author = {La Rosa, G. and Mancini, M.},
title = {Derivations of two-step nilpotent algebras},
journal = {Communications in algebra}, 
volume = {51},
number = {12},
pages = {4928-4948},
year = {2023},
doi = {10.1080/00927872.2023.2222415},
}

@article{CigoliManciniMetere, 
title={On the representability of actions of Leibniz algebras and Poisson algebras},
volume={66},
DOI={10.1017/S0013091523000548},
number={4},
journal={Proceedings of the Edinburgh Mathematical Society},
author={Cigoli, A. S. and Mancini, M. and Metere, G.},
year={2023},
pages={998–1021}
}

@article{LaRosaManciniDiBartolo,
	author = {Di Bartolo, A. and La Rosa, G. and Mancini, M.},
	title = {Non-Nilpotent Leibniz Algebras with One-Dimensional Derived Subalgebra},
	journal = {Mediterranean Journal of Mathematics}, 
	volume = {21},
	number = {138},
	year = {2024},
	doi = {10.1007/s00009-024-02679-0},
}

@article{casas_center,
	author = {Casas, J. M. and Khmaladze, E.},
	title = {On Lie-central extensions of Leibniz algebras},
	journal = {Revista de la Real Academia de Ciencias Exactas, Físicas y Naturales. Serie A. Matemáticas}, 
	volume = {111},
	pages = {39-56},
	year = {2016},
	doi = {10.1007/s13398-016-0274-6},
}

@article{WRA,
	author = {Janelidze, G.},
	title = {Central extensions of associative algebras and weakly action representable categories},
	journal = {Theory and Applications of Categories},
	volume = {38},
	number = {36},
	pages = {1395-1408},
	year = {2022},
}

@article{WRAAlg,
	author = {Brox, J. and García-Martínez, X. and Mancini, M. and Van der Linden, T. and Vienne, C.},
	title = {Weak representability of actions of non-associative algebras},
	journal = {Journal of Algebra},
	volume = {669},
	pages = {401-444},
	year = {2025},
	doi = {10.1016/j.jalgebra.2025.02.007},
}

@article{XabiMancini,
	author = {García-Martínez, X. and Mancini, M.},
	title = {Action accessible and weakly action representable varieties of algebras},
	journal = {Bollettino dell'Unione Matematica Italiana},
	year = {2026},
    volume = {19},
    number = {2},
    pages = {367-374},
	doi = {10.1007/s40574-025-00496-1},
}

@article{misra,
title = {Complete Leibniz algebras},
journal = {Journal of Algebra},
volume = {557},
pages = {172-180},
year = {2020},
doi = {10.1016/j.jalgebra.2020.04.016},
author = {Boyle, K. and Misra, K. C. and Stitzinger, E.},
}

@article{souris,
title = {A characterization of perfect Leibniz algebras},
author={Souris, N. P.},
year={2025},
journal = {Journal of Lie Theory},
note = {Accepted for publication, preprint available at \texttt{arXiv:2509.05990}.},
}

@article{4-dim-nil,
author = {Albeverio, S. and Omirov, B. A. and Rakhimov, I. S.},
journal = {Extracta Mathematicae},
number = {3},
pages = {197-210},
title = {Classification of 4-dimensional nilpotent complex Leibniz algebras},
volume = {21},
year = {2006},
}

@article{4-dim-Misra,
author = {Demir, I. and Misra, K. C. and Stitzinger, E.},
title = {On classification of four-dimensional nilpotent Leibniz algebras},
journal = {Communications in Algebra},
volume = {45},
number = {3},
pages = {1012--1018},
year = {2017},
doi = {10.1080/00927872.2016.1172626},
}

@article{4-dim-solv,
title = {The classification of 4-dimensional Leibniz algebras},
journal = {Linear Algebra and its Applications},
volume = {439},
number = {1},
pages = {273-288},
year = {2013},
doi = {10.1016/j.laa.2013.02.035},
author = {Cañete, E. M. and Khudoyberdiyev, A. Kh.},
}

@book{ayupov2019,
	title={Leibniz Algebras: Structure and Classification},
	author={Ayupov, S.  and Omirov, B.  and Rakhimov, I.},
	isbn={9780429344336},
	year={2019},
	publisher={CRC Press},}

@article{low_dim,
	title = {On derivations of low-dimensional complex Leibniz algebras},
	journal = {JP Journal of Algebra, Number Theory and Applications},
	volume = {21},
	number={1},
	pages = {69-81},
	year = {2011},
	author = { Rakhimov, I. and Al-Nashri, A.-H.},
}

@article{2-dimCuvier,
	title = {{A}lgèbres de {L}eibnitz: définitions, propriétés},
	journal = {Annales scientifiques de l'École Normale Supérieure},
	volume = {27},
	number={1},
	pages = {1-45},
	year = {1994},
	doi = {10.24033/asens.1687},
	author = {Cuvier, C.},
}

@article{3-dimLadra,
	title = {An algorithm for the classification of 3-dimensional complex {L}eibniz algebras},
	journal = {Linear Algebra and its Applications},
	volume = {436},
	number={9},
	pages = {3747-3756},
	year = {2012},
	doi = {10.1016/j.laa.2011.11.039},
	author = {Casas, J. M. and Insua, M. A. and Ladra, M. and Ladra, S.},
}

@article{3-dimF,
	title = {An algorithm for a classification of three-dimensional {L}eibniz algebras over arbitrary fields},
	journal = {JP Journal of Algebra, Number Theory and Applications},
	volume = {40},
	number={2},
	pages = {181-198},
	year = {2018},
	doi = {10.17654/NT040020181},
	author = {Rakhimov, I. S. and Rikhsiboev, I. M. and Mohammed, M. A.},
}

@article{tim,
	author = {García-Martínez, X. and Tsishyn, M. and Van der Linden, T. and Vienne, C.},
	title = {Algebras with representable representations},
	journal = {Proceedings of the Edinburgh Mathematical Society},
	volume = {64},
	number = {2},
	pages = {555-573},
	year  = {2021},
	doi={10.1017/S0013091521000304},
}

@article{casas,
	author = {Casas, J. M. and Datuashvili, T. and Ladra, M.},
	title = {Universal Strict General Actors and Actors in Categories of Interest},
	journal = {Applied Categorical Structures},
	volume = {18},
	pages = {85-114},
	year  = {2010},
	doi = {10.1007/s10485-008-9166-z},
}

@article{loday1993,
	title={Une version non commutative des algebres de {L}ie: les algebres de {L}eibniz},
	author={Loday, J.-L.},
	journal={L'Enseignement Math\'ematique},
	volume={39},
	number = {3-4},
	pages={269-293},
	year={1993}
}

@book{maclane2013,
	title={Categories for the working mathematician},
	author={Mac Lane, S.},
	volume={5},
	year={2013},
	publisher={Springer-Verlag, New York},
	note = {Graduate Texts in Mathematics},
	isbn={978-1-4757-4721-8},
    doi={10.1007/978-1-4757-4721-8},
}

@article{blokhLie1965,
	title={A generalization of the concept of a Lie algebra},
	author={Blokh, A.},
	journal={Dokl. Akad. Nauk SSSR},
	volume={165},
	number={3},
	pages={471--473},
	year={1965}
}

@book{jacobsonLie,
	title={Lie Algebras},
	author={Jacobson, N.},
	isbn={978-0486638324},
	series={Wiley Interscience},
	year={1962},
	publisher={Dover Publications}
}

@book{Burnside_2012, place={Cambridge}, 
series={Cambridge Library Collection - Mathematics}, 
title={Theory of Groups of Finite Order}, 
publisher={Cambridge University Press}, 
author={Burnside, W.}, 
year={2012}, 
isbn={9781108050326},
collection={Cambridge Library Collection - Mathematics}}

@article{dibartolo2025biderivationscompleteleibnizalgebras,
      title={Biderivations of complete Leibniz algebras}, 
      author={Di Bartolo, A. and Di Fatta, F. P. and La Rosa, G.},
      year={2025},
      note = {Preprint available at \texttt{arXiv:2510.16426}.},
}
\end{document}